\definecolor{red-}{rgb}{1.0,0.0,0.0}
\definecolor{grey}{rgb}{0.6, 0.6, 0.6}
\definecolor{brown}{rgb}{0.5,0.2,0.0}
\definecolor{brown-}{rgb}{0.0,0.1,1.0}
\definecolor{green-}{rgb}{0.0, 0.6, 0.0}
\definecolor{gold}{rgb}{0.8,0.7,0.0}
\definecolor{black}{rgb}{0.0,0.0,0.0}
\definecolor{DarkGreen}{rgb}{0.0,0.3,0.2}
\definecolor{LightGreen}{rgb}{0.8,1.0, 0.8}
\definecolor{yellow}{rgb}{0.9,0.9,0.0}
\theoremstyle{plain}
 \newtheorem{thm}{Theorem}
 \newtheorem{cor}[thm]{Corollary}
 \newtheorem{prp}[thm]{Proposition}
\def\thetitle{Rational conic fibrations}
\begin{document}

\title{Rational conic fibrations\\
of sectional genus two}

\author
[A. Lanteri, R. Mallavibarrena]{Antonio Lanteri,
Raquel Mallavibarrena}
\address{Dipartimento di Matematica ``F. Enriques'',
Universit\`a degli Studi di Milano, Via C. Saldini, 50,  I-20133 Milano, Italy}
\email{antonio.lanteri@unimi.it}
\address{Departamento de \'Algebra, Geometr\'ia y Topolog\'ia,  Facultad de Ciencias Matem\'aticas,
Plaza de las Ciencias, 3 - Universidad Complutense de Madrid,
28040 Madrid, Spain}
\email{rmallavi@mat.ucm.es}

\maketitle

\begin{abstract}

Polarized rational surfaces $(X,\mathcal L)$ of sectional genus two ruled in conics are studied.
When they are not minimal, they are described as the blow-up of $\mathbb F_1$ at some points
lying on distinct fibers. Ampleness and very ampleness of $\mathcal L$ are studied in terms of
their location. When $\mathcal L$ is very ample and there is a line contained in $X$
and transverse to the fibers, the conic fibrations $(X,\mathcal L)$ are classified and
a related property concerned with the inflectional locus is discussed.

\medskip

\subjclass{2010 {\it{Mathematics Subject Classification}}: Primary: 14C20, 14J26; Secondary: 14D06, 14N05; 51N35.}

\smallskip

\keywords{Keywords: Rational conic fibration; ample divisor; sectional genus; inflection point.}

\end{abstract}

\section{Introduction}\label{Intro}

In the study of osculation for projective varieties there is a large literature concerning scrolls over a curve.
The investigation of quadric fibrations over a curve, from the same point of view, was started in \cite{LMP}. In particular, the special
case of dimension two, namely surfaces fibered in conics over a curve, is in itself interesting in some respects:
for instance, though the expected maximal dimension of the osculating spaces does not distinguish them from
a general surface, the special structure they have implies that the curve cut out by any osculating hyperplane
is reducible, containing a fiber.
In \cite{LM} we studied the osculatory behavior of smooth surfaces fibered in conics over a curve, and,
as an application, we described the inflectional loci of a special class of such surfaces with sectional genus $g=2$,
that we called Castelnuovo surfaces.
This study oriented our interest towards a better understanding of how these surfaces are framed in the more
general context of the rational conic fibrations polarized by an ample line bundle of sectional genus two.
From this perspective, first we have to note that while in the context of very ample line bundles rational conic fibrations with $g=2$ coincide with rational surfaces
with $g=2$, this is no longer true in the wider setting of ample line bundles.
Actually, in Fujita's classification of
polarized surfaces with $g=2$ \cite[Theorem 15.2, pp.\ 122--123]{F}, they simply correspond to cases $6_0)- 6_2)$, up to blow-ups (see case $0)$ there),
but other rational surfaces appear (cases 7), 8)) besides conic fibrations.

\medskip

So, let $(X,\mathcal L)$ be a polarized rational surface of sectional genus two ruled in conics. If $X$ is minimal, then $X \cong \mathbb F_e$, a Segre--Hirzebruch surface of invariant $e \geq 0$,
and the fact that $\mathcal L$ is an ample line bundle of genus two immediately leads to determine its expression in terms of the generators of $\text{Pic}(X)$ and to get the bound $e\leq 2$.
If $X$ is not minimal, by means of elementary transformations, we can describe $X$ via a birational morphism $\eta:X \to \mathbb F_1$ as the blow-up of $\mathbb F_1$ at points
$p_1, \dots, p_{\mu}$, lying on distinct fibers, and then
\begin{equation}\label{1}
\mathcal L = \eta^* \mathcal L_0 - \sum_{i=1}^{\mu} e_i,
\end{equation}
where $\mathcal L_0 \in \text{Pic}(\mathbb F_1)$ is an ample line bundle of genus two and $e_i$ is the exceptional curve corresponding to $p_i$ for $i=1, \dots ,\mu$. Moreover, $\mu \leq 11$.
Conversely, the line bundle $\mathcal L$ defined by \eqref{1} in this situation is ample provided that $p_1, \dots, p_{\mu}$ are general enough, according to Yokoyama \cite{Y}. So, as a first thing
we explore the precise conditions that the points $p_1, \dots , p_{\mu}$ have to satisfy in order to ensure the ampleness of $\mathcal L$, and we make them explicit. They are expressed
in the first part of Theorem \ref{restrictions}.
However, doing the same to ensure the very ampleness of $\mathcal L$, in which case, necessarily, $\mu \leq 7$, looks more delicate.
Nevertheless, we succeed to do it thanks to Proposition \ref{veryample} (2),
Proposition \ref{converse},
and the characterization of the nefness and bigness of the anticanonical bundle we obtain by translating Demazure's almost general position condition for points in $\mathbb P^2$ \cite{De}
in terms of $\mathbb F_1$. This is summarized
in the last part of Theorem \ref{restrictions}.

\medskip

In connection with this problem,
one could refer to the paper by E. Bese \cite{Be}, who studied, more generally, the spannedness
and the very ampleness of line bundles of type \eqref{1}, when $X$ is obtained by blowing-up any Segre--Hirzebruch surface $\mathbb F_e$ and $\mathcal L_0$ is any ample line bundle on it.
However, some results of Bese specialized to the situation under consideration do not coincide with ours, unfortunately (see Remark 5 for details). Actually, according to \cite[Theorem 4.2]{Be},
it seems that if $\mathcal L_0$ has bidegree $(2,4)$,
seven points $p_i$'s could lie on a general curve of $\mathbb F_1$ of bidegree (2,2) without affecting the very ampleness of $\mathcal L$. But this is not true
in view of Proposition \ref{converse} and Remark 4.
Moreover, the last part of Theorem \ref{restrictions}
says that this is essentially the key difference. In fact
further slight discrepancies simply derive from the fact that while
the conditions we obtain characterize the very ampleness of $\mathcal L$, \cite{Be} only provides sufficient conditions for that.

\medskip

In the final Section we provide the complete list of conic fibrations with $g=2$, polarized by a very ample line bundle,
containing a line transverse to the fibers (Theorem \ref{gamma}). If $\mu \leq 6$ any such line, as well as those constituting the irreducible components of
the reducible fibers, is contained in the inflectional locus of $X$
embedded in $\mathbb P^N$ by $|\mathcal L|$. This is not true however when $\mu=7$, and in this case we make explicit
the condition ensuring that such a line is not contained in the inflectional locus (Proposition \ref{final}).

\section{Background}\label{back}

All surfaces considered in this paper are smooth and defined over the complex field.
We use the standard notation and terminology from algebraic geometry.
In particular, we recall that a {\it{polarized surface}} is a pair $(X,\mathcal L)$ consisting of a surface $X$
and an ample line bundle $\mathcal L$ on $X$. The  {\it{degree}} and the {\it{sectional genus}} of
$(X,\mathcal L)$ are defined by $d(X,\mathcal L):=\mathcal L^2$ and $g(X,\mathcal L):=1 + \frac{1}{2}
(K_X+\mathcal L)\cdot \mathcal L$, respectively.
We say that
a line bundle is {\it{spanned}} to mean that the corresponding invertible sheaf is generated
by global sections.
For any integer $e \geq 0$, we denote by $\mathbb F_e$ the
Segre--Hirzebruch surface of invariant $e$,
i.e. $\mathbb F_e = \mathbb P(\mathcal E)$, where
$\mathcal E = \mathcal O_{\mathbb P^1}\oplus \mathcal O_{\mathbb P^1}(-e)$.
By $s$ and $f$ we denote the ({\it{a}}, if $e=0$) tautological
section of minimal self-intersection $s^2=-e$, and a fiber, respectively.
We recall that the classes of $s$ and $f$ generate the Picard group of $\mathbb F_e$, hence,
for any line bundle $\mathcal L$ on $\mathbb F_e$ we can write $\mathcal L =  [\alpha s+ \beta f]$
for some integers $\alpha$, $\beta$.
According to \cite[Corollary 2.18, p. 380]{Ha}, $\mathcal L$ is ample, if and only if it is very ample, if and only if
$\alpha > 0$ and $\beta > \alpha e$. For further properties we refer to \cite[Ch.\ V, Sec.\ 2]{Ha}.
In particular, when $\alpha=2$ (and $\beta > 2e$),
the polarized surface $(\mathbb F_e, \mathcal L)$ is a conic bundle. In this case,
by the projection formula we have
$h^0(\mathcal L) = h^0(\pi_*\mathcal L) =
h^0(S^2 \mathcal E \otimes
\mathcal O_{\mathbb P^1}(\beta))$, where $\pi: \mathbb F_e \to \mathbb P^1$ is the bundle projection
and $S^2$ stands for the second symmetric power.
Note that
$$S^2 \mathcal E \otimes \mathcal O_{\mathbb P^1}(\beta) = \mathcal O_{\mathbb P^1}(\beta)
\oplus \mathcal O_{\mathbb P^1}(\beta-e) \oplus \mathcal O_{\mathbb P^1}(\beta-2e).$$
Moreover, all summands have positive degree, since $\beta > 2e$. Therefore,
\begin{equation} \label{h0}
h^0(\mathcal L) = 3(\beta - e+1).
\end{equation}
It is useful to recall that $\mathbb F_1$ is isomorphic to the projective plane $\mathbb P^2$
blown-up at a point, say $q_0$. If $\tau:\mathbb F_1 \to \mathbb P^2$ stands for this blow-up,
then the tautological section $s$ is just the exceptional curve $\tau^{-1}(q_0)$. It follows that
$\tau^*\mathcal O_{\mathbb P^2}(1) = [s+f]$, and then the general element in the linear system
$|s+ \beta f|$ is the proper transform via $\tau$ of an irreducible plane curve of degree $\beta$ having
a singular point of multiplicity $\beta -1$ at $q_0$.

The following fact will be used often.

\noindent{\it{Remark}} 1. Let $r$ be any positive integer. On $\mathbb F_1$, we have $s \cap \gamma = \emptyset$ for any irreducible curve
$\gamma \in |r(s+f)|$. This simply follows from the fact that $s \cdot (s+f)=0$.

\section{Rational conic fibrations in the setting\\ of polarized surfaces}\label{ample}

Let $X$ be a smooth projective surface and let $\mathcal L$ be any line bundle on $X$.
We say that $(X,\mathcal L)$ is a {\it{pre-conic fibration}} if $X$ is endowed with a fibration
$\pi:X \to B$ over a smooth curve $B$ (i.e.\ $\pi$ is a morphism with connected fibers) such
that the following conditions hold:
\begin{equation}\label{general fiber}
(F,\mathcal L_F) = \big(\mathbb P^1, \mathcal O_{\mathbb P^1}(2)\big) \ \text{for any general fiber}\ F
\ \text{of}\ \pi;
\end{equation}
moreover, singular fibers, if any, are reducible and each consists of
\begin{equation}\label{singular fibers}
\text{two (-1)-curves}\ \ell, \ell'\ \text{with}\ \ell \cdot \ell' = \mathcal L \cdot \ell = \mathcal L \cdot \ell'=1.
\end{equation}
Note that if $\mathcal L$ is ample then \eqref{singular fibers} is a consequence of \eqref{general fiber}.
In this case we say that the polarized surface $(X,\mathcal L)$ is a {\it{conic fibration}}.
We will use the expressions {\it{pre-conic bundle}} and {\it{conic bundle}} to mean that $X$ has no singular fibers,
i.e., that it is a $\mathbb P^1$-bundle. If $B=\mathbb P^1$, then $X$ is a rational surface and
we say that $(X,\mathcal L)$ is a {\it{rational pre-conic fibration}}. For such a pair, let
$g$ be the (virtual) arithmetic genus of
$\mathcal L$. First of all we point out the following fact.
Let $F$ be a fiber of $\pi$. Since $F^2=0$ we have $K_X \cdot F = -2$ by the genus formula. On the other hand,
$\mathcal L \cdot F = 2$. So $(K_X+\mathcal L)\cdot F = 0$, and $(K_X+\mathcal L)\cdot \ell =0$
for any irreducible component $\ell$ of a reducible fiber, by
\eqref{singular fibers}, hence
$K_X + \mathcal L=\pi^* \mathcal O_{\mathbb P^1}(\alpha)$
for some integer $\alpha$. In particular, $(K_X+\mathcal L)^2=0$; furthermore,
$2g-2= \mathcal L \cdot (K_X + \mathcal L) = \mathcal L \cdot \alpha F =2\alpha$, which gives
\begin{equation}\label{for_g}
\mathcal L = -K_X + (g-1)F.
\end{equation}
In particular, we get $\mathcal L^2 = K_X^2 + 4(g-1)$. On the other hand, let $\mu$ be number of singular
fibers of $\pi:X \to \mathbb P^1$. By contracting an irreducible component of every singular fiber of $\pi$
we get a birational morphism from $X$ to an $\mathbb F_e$, factoring through $\mu$ blowing-ups; hence
$K_X^2 = 8 - \mu$, and then
\begin{equation}\label{deg}
\mathcal L^2 = 4(g+1) - \mu.
\end{equation}
This applies in particular, when $\mathcal L$ is ample, to polarized rational conic fibrations, that we simply call
{\it{rational conic fibrations}} when there is no ambiguity. In this case $g=g(X,\mathcal L)$ is the sectional genus and
$d=d(X,\mathcal L)$
is the degree of $(X,\mathcal L)$. Then $d > 0$, and $g \geq 0$.
Since $X$ has Picard number $\rho \geq 2$, we note that $g=0$ if and only if the rational conic fibration $(X,\mathcal L)$
has at the same time the structure of a rational scroll \cite[Corollary 2.3]{LP}, and this can happen only for
$(\mathbb P^1 \times \mathbb P^1, \mathcal O_{\mathbb P^1 \times \mathbb P^1}(1,2))$, $\pi$ being
the projection onto the first factor. We can thus assume that $g \geq 1$.
Moreover, we note that rational conic fibrations with $g=1$ are simply the pairs $(X,-K_X)$,
where $X$ is a del Pezzo surface distinct from $\mathbb P^2$. This explains the interest for case $g=2$.

\begin{prp} \label{mu}
Let $(X,\mathcal L)$ be a rational conic fibration. Then
$\mu \leq 4g+3$. Moreover,
$d \leq 4(g+1)$, with equality if and only if
$$(X,\mathcal L)=\big(\mathbb F_e, [2s+(g+e+1)f]\big)$$
for some integer $0 \leq e \leq g$.
\end{prp}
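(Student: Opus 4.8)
The plan is to prove the bound $\mu \le 4g+3$ directly from the degree formula, and then to characterize the equality case $d = 4(g+1)$ via the numerical constraints on an ample line bundle on $\mathbb{F}_e$.

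First I would establish the bound on $\mu$. Since $\mathcal{L}$ is ample, $d = \mathcal{L}^2 > 0$, so from the degree formula \eqref{deg}, namely $\mathcal{L}^2 = 4(g+1) - \mu$, I immediately get $\mu < 4(g+1)$, that is $\mu \le 4g+3$. This also shows $d = 4(g+1) - \mu \le 4(g+1)$, with equality precisely when $\mu = 0$, i.e.\ when $(X,\mathcal{L})$ is a conic bundle with no singular fibers. Thus the equality case forces $X$ to be a $\mathbb{P}^1$-bundle over $\mathbb{P}^1$, so $X \cong \mathbb{F}_e$ for some $e \ge 0$, polarized by an ample line bundle whose restriction to a fiber has degree two.

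Next I would pin down $\mathcal{L}$ on $\mathbb{F}_e$. Writing $\mathcal{L} = [\alpha s + \beta f]$, the condition that $\mathcal{L}$ restricts to $\mathcal{O}_{\mathbb{P}^1}(2)$ on the fiber $f$ forces $\alpha = \mathcal{L}\cdot f = 2$, so $(\mathbb{F}_e, \mathcal{L})$ is a conic bundle in the sense recalled in the Background section. Using $K_{\mathbb{F}_e} = [-2s - (e+2)f]$ together with the sectional genus formula (or equivalently \eqref{for_g}, $\mathcal{L} = -K_X + (g-1)F$), I would solve for $\beta$. A short computation gives $\beta = g + e + 1$, yielding $\mathcal{L} = [2s + (g+e+1)f]$, exactly as claimed.

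Finally I would verify ampleness and the range of $e$. By the criterion of \cite[Corollary 2.18]{Ha}, $\mathcal{L} = [2s+\beta f]$ is ample if and only if $\beta > 2e$, i.e.\ $g+e+1 > 2e$, which is equivalent to $e < g+1$, so $0 \le e \le g$. Conversely, for each such $e$ the line bundle $[2s+(g+e+1)f]$ is ample with $\alpha = 2$, making $(\mathbb{F}_e, \mathcal{L})$ a rational conic bundle of sectional genus $g$ and degree $\mathcal{L}^2 = 4(g+1)$, so equality holds. I expect the only genuinely delicate point to be confirming that the equality case $\mu=0$ cannot be realized by a $\mathbb{P}^1$-bundle over a base other than $\mathbb{P}^1$ or by a surface that is not minimal among conic bundles; but since $B = \mathbb{P}^1$ is built into the definition of a rational conic fibration and $\mu=0$ literally means no singular fibers, the minimality is automatic, so the characterization is clean.
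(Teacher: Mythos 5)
Your proposal is correct and follows essentially the same route as the paper: both deduce $\mu\le 4g+3$ and $d\le 4(g+1)$ directly from \eqref{deg} using $d\ge 1$ and $\mu\ge 0$, identify the equality case with $\mu=0$ (hence $X\cong\mathbb F_e$), recover $\beta=g+e+1$ from the genus formula, and obtain $e\le g$ from ampleness (your use of the criterion $\beta>\alpha e$ is the same computation as the paper's $s\cdot\mathcal L=g+1-e>0$). No gaps.
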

\noindent
\begin{proof}
The first inequality
is obvious in view of \eqref{deg}, since $d \geq 1$, $\mathcal L$ being ample.
The second inequality
is simply due to the fact that $\mu \geq 0$ and the characterization of the equality
follows from the properties of the Segre--Hirzebruch surfaces.
Once we know that $X = \mathbb F_e$ for some $e \geq 0$, the expression of $\mathcal L$ follows from
the genus formula. Finally, the fact that $s \cdot \mathcal L = -e + g +1 > 0$,
due to the ampleness of $\mathcal L$, implies that $e \leq g$.
\end{proof}

\noindent{\it{Remark}} 2. Clearly, the upper bound for $\mu$ provided by Proposition \ref{mu}
can be improved if the ample line bundle $\mathcal L$ satisfies some additional requirement.
Actually, if $\mathcal L$ is ample and spanned, then necessarily $\mathcal L^2 \geq 2$ and equality
holds if and only if either $(X,\mathcal L) = \big(\mathbb P^1 \times \mathbb P^1, \mathcal O_{\mathbb P^1 \times \mathbb P^1}(1,1) \big)$,
which however is not a conic fibration, or there exists a finite morphism $\varphi:X \to \mathbb P^2$ of degree $2$
such that $\mathcal L = \varphi^*\mathcal O_{\mathbb P^2}(1)$. Let $2b$ be the degree of the branch divisor;
then $K_X = \varphi^*\mathcal O_{\mathbb P^2}(-3+b) = (b-3)\mathcal L$ by the ramification formula and so
the condition $(K_X+\mathcal L)^2=0$ implies $b=2$, i.e. $\varphi$ is the del Pezzo double plane, in which case
$g=1$. As a consequence, if $\mathcal L$ is ample and spanned, then $\mu \leq 6$ if $g=1$, while $\mu \leq 4g+1$
if $g \geq 2$.
We have a stronger improvement assuming that $\mathcal L$ is very ample.
Actually, by restricting the fibration morphism $\pi:X \to \mathbb P^1$ to any smooth curve $C \in |\mathcal L|$ we get
a morphism $\pi|_C:C \to \mathbb P^1$ of degree $2$; so either $g=1$ or $C$ is hyperelliptic. Therefore
\begin{equation}\label{d_g}
\mu \leq 4(g+1) - d_g,
\end{equation}
where $d_g$ denotes the smallest degree of a curve of this type. For instance,
$\mu \leq 5$ ($\mu \leq 7$ resp.) if $g=1$ ($g=2$ resp.). However, though \eqref{d_g} is sharp for $g \leq 2$, this
is no longer true in general. For instance, for $g=3$ we have $d_g = 6$, hence \eqref{d_g} would give $\mu \leq 10$.
Nevertheless, according to \cite[Theorem 4.1,ii)]{Io}, the smallest degree of a rational conic fibration embedded in some projective space is $7$,
hence $\mu \leq 9$.

\smallskip

Consider again a rational conic fibration $(X,\mathcal L)$ where
$\mathcal L$ is simply an ample line bundle. In view of the characterization provided by Proposition \ref{mu}
we can assume that $\pi$ has $\mu \geq 1$ singular fibers. As we said, each of them consists of two $(-1)$-lines meeting at
one point. By contracting one of these two $(-1)$-curves on every singular fiber we thus get a birational morphism
$\eta:X \to X_0$ where $X_0$ is a rational $\mathbb P^1$-bundle, hence $X_0=\mathbb F_e$,
for some $e \geq 0$. Moreover, $\mathcal L = \eta^* \mathcal L_0 - \sum_{i=1}^{\mu} e_i$, where $e_1, \dots ,
e_{\mu}$ are the exceptional curves contracted by $\eta$, and $\mathcal L_0$ is an ample line bundle on $\mathbb F_e$
in view of the Nakai--Moishezon criterion and the ampleness of $\mathcal L$.
Furthermore, the fact that $2 = \mathcal L \cdot F$ for every fiber $F$ of $X$ implies
that also the fibers $f$ of $\mathbb F_e$ have degree $2$ with respect to $\mathcal L_0$. So,
we can write $\mathcal L_0 = [2s+\alpha f]$, for some integer $\alpha$.
Note that $K_X = \eta^*K_{\mathbb F_e} + \sum_{i=1}^{\mu}e_i$. Since $K_{\mathbb F_e}=[-2s-(e+2)f]$, the
sectional genus $g$ of $(X,\mathcal L)$ is given by
\begin{equation} \nonumber
2g-2=\mathcal L \cdot (K_X + \mathcal L) = \mathcal L_0 \cdot (K_{\mathbb F_e}+\mathcal L_0) =[2s+\alpha f] \cdot
(\alpha -2-e)f = 2(\alpha -2-e).
\end{equation}
Thus $\mathcal L_0 = [2s + (e+g+1)f]$ and the ampleness implies $e \leq g$. However, we can say more.
Consider the points $p_i=\eta(e_i)$, $i=1, \dots ,\mu$.
Clearly $p_1, \dots , p_{\mu}$ lie on distinct fibers of $\mathbb F_e$. Suppose that $t$ of these points, e.g. the first $t$, $p_1, \dots , p_t$,
lie on the section $s$ (on the same section $s$ if $e=0$).

\begin{prp} \label{elm}
Let $\eta:X \to \mathbb F_e$ be a birational morphism expressing $X$ as the blow-up of $\mathbb F_e$ at points $p_1, \dots ,p_{\mu}$
lying on distinct fibers, with some of them, say $p_1, \dots , p_t$ with $t \geq 1$ belonging to the $(-e)$-section $s$. Then there exists a birational morphism
$\eta^{\prime}:X \to \mathbb F_{e+1}$ expressing $X$ as the blow-up of $\mathbb F_{e+1}$ at points $p_1^{\prime}, \dots, p_{\mu}^{\prime}$ lying on distinct fibers,
with only $t-1$ of them, say $p_2^{\prime}, \dots p_t^{\prime}$, belonging to the $\big(-(e+1)\big)$-section $s^{\prime}$ of $\mathbb F_{e+1}$, and conversely.
\end{prp}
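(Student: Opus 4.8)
The plan is to realize the passage from $\mathbb{F}_e$ to $\mathbb{F}_{e+1}$ as a single elementary transformation centred at the point $p_1 \in s$, performed inside $X$ itself. Recall that over $\pi(p_1)$ the surface $X$ carries a singular fiber $\ell_1 + e_1$, where $e_1$ is the exceptional curve of $\eta$ over $p_1$ and $\ell_1$ is the proper transform of the fiber $f_1$ of $\mathbb{F}_e$ through $p_1$; both are $(-1)$-curves meeting at one point. The morphism $\eta$ contracts $e_1, \dots, e_{\mu}$. I would instead define $\eta^{\prime}:X \to Y$ by contracting the $\mu$ pairwise disjoint $(-1)$-curves $\ell_1, e_2, \dots, e_{\mu}$, switching the contracted component only over the first singular fiber. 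Since one component of every singular fiber is contracted and these curves lie over the distinct fibers $\pi(p_1), \dots, \pi(p_{\mu})$, the result $Y$ is again a $\mathbb{P}^1$-bundle over $\mathbb{P}^1$, hence $Y = \mathbb{F}_{e^{\prime}}$ for some $e^{\prime}$, and I set $p_1^{\prime} := \eta^{\prime}(\ell_1)$ and $p_i^{\prime} := \eta^{\prime}(e_i)$ for $i \geq 2$.

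The crux is to compute $e^{\prime}$ and to locate the new points relative to the minimal section of $Y$; everything rests on an intersection-number bookkeeping with the proper transform $\hat s$ of $s$ in $X$. Since exactly $p_1, \dots, p_t$ lie on $s$, one has $\hat s^2 = s^2 - t = -e-t$, together with $\hat s \cdot e_i = 1$ for $1 \leq i \leq t$, $\hat s \cdot e_i = 0$ for $i > t$, and $\hat s \cdot \ell_1 = 0$ (the section and $f_1$ meet only at $p_1$, so their proper transforms separate after blowing up $p_1$). Contracting pairwise disjoint $(-1)$-curves raises $\hat s^2$ by $\sum_C (\hat s \cdot C)^2$ summed over the contracted curves $C$, so only $e_2, \dots, e_t$ contribute, each by $1$; hence $s^{\prime} := \eta^{\prime}(\hat s)$ satisfies $(s^{\prime})^2 = (-e-t) + (t-1) = -(e+1)$. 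This identifies $Y$ with $\mathbb{F}_{e+1}$ and $s^{\prime}$ with its $\big(-(e+1)\big)$-section.

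With this in hand the location of the points is immediate: $p_i^{\prime} \in s^{\prime}$ exactly when $e_i$ meets $\hat s$, that is for $2 \leq i \leq t$, giving precisely the $t-1$ points $p_2^{\prime}, \dots, p_t^{\prime}$ on $s^{\prime}$, whereas $p_1^{\prime} \notin s^{\prime}$ because $\hat s \cdot \ell_1 = 0$. As their centres lie over distinct fibers of $\mathbb{F}_e$, the points $p_1^{\prime}, \dots, p_{\mu}^{\prime}$ lie on distinct fibers of $\mathbb{F}_{e+1}$, and $X$ is by construction the blow-up of $\mathbb{F}_{e+1}$ at these $\mu$ points. For the converse I would run the same argument in reverse: starting from $\eta^{\prime}$ and re-contracting the other component $e_1$ over the first singular fiber (instead of $\ell_1$) is the elementary transformation centred at $p_1^{\prime} \notin s^{\prime}$; since $p_1^{\prime}$ lies off the minimal section, this raises the minimal self-intersection from $-(e+1)$ back to $-e$, landing on $\mathbb{F}_e$ and restoring $p_1$ on $s$, thus yielding $t$ points on the section.

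The only genuinely delicate point is the sign in the self-intersection computation, namely that placing the centre $p_1$ on the minimal section raises the invariant to $e+1$ rather than lowering it; this is exactly what the relation $\hat s \cdot \ell_1 = 0$ versus $\hat s \cdot e_i = 1$ records. Once this is pinned down, the remaining assertions are formal.
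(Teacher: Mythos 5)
Your proof is correct and follows essentially the same route as the paper: both realize the passage $\mathbb F_e \dashrightarrow \mathbb F_{e+1}$ as the elementary transformation centred at $p_1\in s$, factored through $X$ by switching which component of the singular fiber over $\pi(p_1)$ is contracted. The only difference is that the paper cites Nagata for the properties of $\mathrm{elm}_{p_1}$, whereas you verify them directly by the intersection computation $(s^{\prime})^2=(-e-t)+(t-1)=-(e+1)$ (which, being negative, does force $s^{\prime}$ to be the minimal section and $Y=\mathbb F_{e+1}$), so your argument is a correct, self-contained version of the same proof.
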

\begin{proof}
By \cite[Section 2]{N} (see also \cite[Ch.\ V. \S\ 1, pp.\ 85--89]{Sh}) the elementary transformation $\text{\rm{elm}}_{p_1}$ centered at $p_1$, gives rise to a commutative
diagram
\begin{equation}\nonumber
\xymatrix@=30pt{ & X \ar[rd]^{\eta^{\prime}} \ar[ld]_{\eta} & \\
\mathbb F_e & - - \xrightarrow{\text{\rm{elm}}_{p_1}} & \mathbb F_{e+1}. }
\end{equation}
where $\eta^{\prime}:X \to \mathbb F_{e+1}$ is the blowing up of $\mathbb F_{e+1}$ at $p_1^{\prime}, \dots , p_{\mu}^{\prime}$,
with $p_i^{\prime} = \text{\rm{elm}}_{p_1}(p_i)$ for $i=2, \dots , \mu$ and $p_1^{\prime}=\text{\rm{elm}}_{p_1}(f_1)$, where
$f_1$ is the fiber of $\mathbb F_e$ through $p_1$, since $p_1 \in s$. Then $s^{\prime}:=\text{\rm{elm}}_{p_1}(s)$ is the $\big(-(e+1)\big)$-section of $\mathbb F_{e+1}$
and $p_i^{\prime} \in s^{\prime}$ for $i=2, \dots ,t$, while $p_1^{\prime} \not\in s^{\prime}$.
\end{proof}

Up to an iterated application of Proposition \ref{elm}, we can thus assume that $e=1$, i.e., $\eta:X \to \mathbb F_1$,
with $p_1, \dots , p_{\mu}$ lying on distinct fibers. As a consequence, $\mathcal L_0 = [2s + (g+2)f]$.
By the way we note that
$$\mathcal L^2 = \mathcal L_0^2 - \mu = -4e + 4(g+2) - \mu = 4(g+1) - \mu,$$
as already pointed out in \eqref{deg}.

The above discussion can be summed up in the following structure theorem for polarized rational conic fibrations
(compare with \cite[Proposition 3.1, i) for $g=2$ and Theorem 4.1, ii) for $g=3$]{Io} when $\mathcal L$ is very ample).
\begin{thm}\label{structure} Let $(X,\mathcal L)$ be a
rational conic fibration of sectional genus $g$. Then either
\begin{enumerate}
\item $(X,\mathcal L) = (\mathbb F_e, [2s+(g+e+1)f])$ for some $0 \leq e \leq g$, or
\item $X$ has $\mu > 0$ singular fibers, it is obtained by blowing-up $\mathbb F_1$ at points $p_1, \dots, p_{\mu}$
lying on distinct fibers, and $\mathcal L = \eta^*[2s+(g+2)f] -\sum_{i=1}^{\mu} e_i$, where $\eta:X \to \mathbb F_1$
is the blow-up.
\end{enumerate}
\end{thm}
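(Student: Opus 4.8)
The plan is to read the statement as a synthesis of the facts assembled in this section, organizing the argument according to the number $\mu$ of singular fibers of $\pi$. Since $(X,\mathcal L)$ is a rational conic fibration, \eqref{deg} gives $d = 4(g+1) - \mu$, so the two alternatives (1) and (2) correspond exactly to $\mu = 0$ and $\mu > 0$. First I would dispose of the minimal case $\mu = 0$: here $\pi\colon X \to \mathbb P^1$ has no singular fibers, hence it is a $\mathbb P^1$-bundle and $X \cong \mathbb F_e$. Then $d = 4(g+1)$ realizes the equality in Proposition \ref{mu}, whose characterization yields $(X,\mathcal L) = (\mathbb F_e, [2s+(g+e+1)f])$ with $0 \le e \le g$, which is alternative (1).

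For the non-minimal case $\mu > 0$ I would follow the discussion preceding the statement. Contracting one of the two $(-1)$-curves in each of the $\mu$ singular fibers, whose shape is prescribed by \eqref{singular fibers}, produces a birational morphism $\eta\colon X \to X_0$ onto a surface with no singular fibers, so $X_0 = \mathbb F_e$ for some $e \ge 0$ and $\mathcal L = \eta^*\mathcal L_0 - \sum_{i=1}^\mu e_i$ with $\mathcal L_0$ ample by the Nakai--Moishezon criterion. Since the general fibers of $\pi$ have $\mathcal L$-degree $2$ and are carried isomorphically onto fibers $f$ of $\mathbb F_e$, we get $\mathcal L_0 \cdot f = 2$, so $\mathcal L_0 = [2s + \alpha f]$; the genus formula $2g - 2 = \mathcal L_0 \cdot (K_{\mathbb F_e} + \mathcal L_0)$ then pins down $\alpha = e + g + 1$ and forces $e \le g$. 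The points $p_i = \eta(e_i)$ lie on distinct fibers because the $\mu$ singular fibers of $X$ map to distinct fibers of $\mathbb F_e$.

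The remaining, and principal, step is to normalize the base surface to $\mathbb F_1$. I would do this by iterated application of the elementary transformations of Proposition \ref{elm} together with its converse: centering $\mathrm{elm}$ at a point of the $(-e)$-section raises $e$ by one (and lowers by one the number of points lying on the section), while the converse, centered at a suitable point off the section, lowers $e$ by one; throughout, the blown-up points remain on distinct fibers. The delicate point is to guarantee that $e$ can always be driven down to exactly $1$: when $e \ge 2$ this requires a point off the $(-e)$-section, and I would control this by the ampleness estimate for the proper transform $\tilde s$ of the section, namely $\mathcal L \cdot \tilde s = (g+1-e) - \#\{\, i : p_i \in s \,\} > 0$, which bounds the number of points on $s$ and rules out the stuck configuration in which every $p_i$ lies on $s$ while $e$ is large relative to $g$. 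Once $e = 1$ is attained we have $\alpha = e + g + 1 = g + 2$, so $\mathcal L_0 = [2s + (g+2)f]$ and $\mathcal L = \eta^*[2s+(g+2)f] - \sum_{i=1}^\mu e_i$, which is alternative (2). I expect this reduction to $\mathbb F_1$—and in particular the bookkeeping that keeps the points on distinct fibers while tracking how many lie on the negative section—to be the crux of the argument.
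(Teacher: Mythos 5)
Your proposal follows the paper's own route step for step: the case $\mu=0$ is settled by \eqref{deg} and the equality case of Proposition \ref{mu}; for $\mu>0$ one contracts one $(-1)$-curve in each singular fiber to reach some $\mathbb F_e$ with $\mathcal L_0=[2s+(e+g+1)f]$ and $e\le g$; and one then normalizes to $e=1$ by elementary transformations. You are right that this last normalization is the crux --- the paper itself dispatches it with the single sentence ``up to an iterated application of Proposition \ref{elm} we can assume $e=1$'' --- and you are more explicit than the paper in trying to justify it. But the justification you offer does not close. The ampleness estimate $\mathcal L\cdot\widetilde s=(g+1-e)-t>0$ for the proper transform $\widetilde s$ of the $(-e)$-section only yields $t\le g-e$; it does not exclude the ``stuck'' configuration $t=\mu$, which remains possible whenever $\mu\le g-e$, i.e.\ as soon as $e\le g-1$ and $\mu$ is small. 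Concretely, take $g=3$: let $X$ be the blow-up $\eta$ of $\mathbb F_2$ at a single point $p_1\in s$ and $\mathcal L=\eta^*[2s+6f]-e_1$. A Nakai--Moishezon check shows $\mathcal L$ is ample, and $(X,\mathcal L)$ is a rational conic fibration with $g=3$, $\mu=1$, whose unique singular fiber is $e_1+\widetilde f_1$. Contracting $e_1$ gives $\mathbb F_2$, contracting $\widetilde f_1$ gives $\mathbb F_3$ (precisely because $p_1\in s$), and these are the only two contractions available; moreover $X$ contains a $(-3)$-curve, whereas the blow-up of $\mathbb F_1$ at one point contains no curve of self-intersection below $-2$, so $X$ is not even abstractly isomorphic to such a blow-up. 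Thus the step you flagged as delicate is not merely delicate: it genuinely fails for $g\ge3$, and conclusion (2) of the statement is false there. This defect is inherited from the paper, whose implicit argument has exactly the same lacuna.

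For $g=2$, however, your argument is complete: there the bound $t\le g-e$ forces $t=0$ whenever $e\ge2$, so every $p_i$ lies off the minimal section and the descent to $\mathbb F_1$ via the converse of Proposition \ref{elm} always has an admissible center (and the ascent from $\mathbb F_0$ is automatic, every point of $\mathbb F_0$ lying on a $0$-section). Since $g=2$ is the only case the rest of the paper uses, your write-up suffices for all subsequent applications; but as a proof of the theorem as stated, for arbitrary $g$, it has a genuine gap at exactly the point you identified, and that gap cannot be repaired without weakening the statement (e.g.\ allowing the base of the blow-up in case (2) to be some $\mathbb F_e$ with $e\ge1$ rather than $\mathbb F_1$ itself).
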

In the following, we will explore some further restrictions on the location of the $p_i$'s
deriving from the ampleness
of $\mathcal L$. First of all we note the following fact.

\begin{prp}\label{t}
Suppose that $p_1, \dots , p_t$ belong to the $(-1)$-section $s$ of $\mathbb F_1$. Then
$t \leq g-1$.
\end{prp}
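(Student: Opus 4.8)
The plan is to exploit the ampleness of $\mathcal{L}$ by testing it against the proper transform of the section $s$. Since $p_1, \dots, p_t$ lie on $s$ and sit on distinct fibers, they are $t$ distinct smooth points of $s$; hence its proper transform under $\eta$ is the irreducible curve $\widetilde{s} = \eta^* s - \sum_{i=1}^t e_i$. Because $s$ is irreducible and each blow-up is centered at a smooth point of (the transform of) $s$, this $\widetilde{s}$ is again an irreducible curve on $X$, which is what will let us invoke ampleness at the end.

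The key step is to compute the intersection number $\mathcal{L} \cdot \widetilde{s}$ using the structure theorem (Theorem \ref{structure}), which gives $\mathcal{L} = \eta^*[2s + (g+2)f] - \sum_{i=1}^{\mu} e_i$. Recalling the intersection data on $\mathbb{F}_1$, namely $s^2 = -1$, $s \cdot f = 1$, $f^2 = 0$, together with $\eta^* A \cdot e_i = 0$ and $e_i \cdot e_j = -\delta_{ij}$, the pullback terms contribute $[2s+(g+2)f] \cdot s = -2 + (g+2) = g$, while the exceptional terms contribute $\bigl(-\sum_{i=1}^{\mu} e_i\bigr)\cdot\bigl(-\sum_{j=1}^{t} e_j\bigr) = \sum_{j=1}^{t} e_j^2 = -t$, since only the indices $i = j \leq t$ survive. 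Hence $\mathcal{L} \cdot \widetilde{s} = g - t$.

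Since $\widetilde{s}$ is an irreducible curve on $X$ and $\mathcal{L}$ is ample, the degree of $\mathcal{L}$ on $\widetilde{s}$ must be positive, i.e. $g - t > 0$, which is exactly the desired bound $t \leq g - 1$.

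I do not expect a genuine obstacle here. The only points requiring care are to write the class of the proper transform correctly, so that the self-intersections of the $e_i$ enter with the right sign, and to confirm that $\widetilde{s}$ is irreducible so that the positivity $\mathcal{L} \cdot \widetilde{s} > 0$ applies directly; everything else reduces to the short intersection computation above.
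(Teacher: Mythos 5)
Your proof is correct and follows essentially the same route as the paper: both take the proper transform $\eta^*s-\sum_{i=1}^t e_i$ of the section, compute its degree $g-t$ with respect to $\mathcal L$, and conclude $t\leq g-1$ from the ampleness of $\mathcal L$. The intersection computation and the sign of the exceptional contribution are handled correctly.
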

\begin{proof}
Let $\sigma: =\eta^{-1}(s)$ be the proper transform of $s$ via $\eta$.
Then $\sigma = \eta^* s - \sum_{i=1}^t e_i$, since $s$ is smooth. Therefore its degree with respect to $\mathcal L$ is given by
\begin{equation} \label{degree}
\sigma \cdot \mathcal L = \Big(\eta^* s - \sum_{i=1}^t e_i\Big)\cdot \Big(\eta^* \mathcal L_0 - \sum_{i=1}^{\mu} e_i\Big)=
s \cdot \mathcal L_0 -t = g-t.
\end{equation}
Then the assertion follows from the ampleness of $\mathcal L$.
\end{proof}

An advantage of looking at $\eta: X \to \mathbb F_1$ is that, as we said, $\mathbb F_1$ is $\mathbb P^2$ blown-up at a point,
say $q_0$, the tautological section $s$ of $\mathbb F_1$ being the exceptional curve of
the blow-up $\tau: \mathbb F_1 \to \mathbb P^2$ of $\mathbb P^2$ at $q_0$.
Set $q_i:=\tau(p_i)$ for every $i=1, \dots, \mu$. Composing $\eta$ with $\tau$ we can look at
the birational morphism $\theta:= \tau \circ \eta:X \to \mathbb P^2$, which allows us to regard
$X$ as the projective plane blown-up at the $\mu+1$ points $q_0, q_1, \dots , q_{\mu}$.
Note that $q_0$ cannot be collinear with two other points $q_i$ and $q_j$, otherwise their preimages $p_i$ and $p_j$ via $\tau$
would belong to the same fiber of $\mathbb F_1$, which is not the case.

Moreover, given a point, say $p_1$, on $s$,
the corresponding point $q_1 \in \mathbb P^2$ is an infinitely near point to $q_0$.
According to Proposition \ref{t}, we thus see that at most $g-1$ of the $q_i$'s $(i=1, \dots, \mu)$ can be infinitely near points to $q_0$.

More generally, if $\mu > 0$, we can observe the following fact.

\noindent {\it{Remark}} 3. Up to renaming, suppose that $p_1, \dots, p_{\nu}$
with $\nu \leq \mu$ lie on an irreducible section $s'$ of $\mathbb F_1$. Since $s^{\prime} \in |s+\beta f|$ for some $\beta \geq 0$ (so $s^{\prime}=s$ if and only if $\beta = 0$),
letting $\sigma^{\prime}=\eta^{-1}(s^{\prime})$, the same computation as in \eqref{degree}
gives $\sigma^{\prime} \cdot \mathcal L = 2\beta + g -\nu$, and
therefore, the ampleness of $\mathcal L$ implies that no more than $g+2\beta-1$ of the points $p_i$ can lie on an irreducible section $s' \in |s+\beta f|$ of $\mathbb F_1$.
Looking at the blowing-up $\tau:\mathbb F_1 \to \mathbb P^2$ and recalling what we said in Section \ref{back}, we know that
for $\beta = 1$ ($\beta = 2$ respect.), $s^{\prime}$ is the proper transform of a line in $\mathbb P^2$ not containing $q_0$
(of an irreducible conic passing through $q_0$ respect.). Thus, for $g=1$ the above restriction is equivalent to the requirement that no three
of the $q_i$'s in $\mathbb P^2$ ($i=1, \dots, \mu$)
are collinear and that at most four of them can lie on an irreducible conic passing through $q_0$.
These conditions are well-known in view of the classification of del Pezzo surfaces.
Similarly, for $g=2$, we have that at most three of the $q_i$'s ($i=1, \dots, \mu$) can be collinear and at most five of them can lie on an irreducible conic passing through $q_0$.
In the same way, looking at the proper transform $\gamma$ of a general element in $|2s+2f|$, i.e. the proper transform via $\theta$ of a general conic not passing through $q_0$, we conclude
that no more than $2g+3$ of the $q_i$'s $(i=1,\dots, \mu)$ can lie on a general conic in $\mathbb P^2$ (i.e., no more than $5$ for $g=1$ and no more than $7$ for $g=2$).
Let us note the following consequence. If $g=2$, then $q_0, q_1, \dots , q_6$ cannot lie on an irreducible conic, otherwise
$\mathcal L$ could not be ample.
On the other hand, $q_1, \dots, q_7$ lying on an irreducible conic does not affect the ampleness of $\mathcal L$. However, this prevents $\mathcal L$ from being
very ample (see Remark 4 in Sec.\ \ref{rat_g=2}).

Along the same line we can investigate further restrictions on the $p_i$'s in connection with any irreducible curve $C \in |2s+\beta f|$ for $\beta \geq 3$. In this case, $p_a(C) \geq 1$
and, since $C \cdot f= 2$, if $p_i \in C$, then $\text{mult}_{p_i}(C) \leq 2$, so $C$ can have at worst double points, provided that their number does not exceed $p_a(C)$. For instance, letting
$\beta = 3$ we have $p_a(C)=1$, hence $C$ can have one double point at most. So, if e.\ g. $C$ has a double point at $p_1$ and passes through $p_2, \dots , p_{\nu}$, with
$\nu \leq \mu$, then from
$$C \cdot \mathcal L = \Big(\eta^*(2s+3f)-2e_1 - \sum_{j=2}^{\nu}e_j\Big) \cdot \Big(\eta^*(2s+4f)-\sum_{i=1}^{\mu}e_i\Big) = 9-\nu,$$
we see that if $\mu \geq 9$, the ampleness of $\mathcal L$ prevents that $9$ of the $p_i$'s lie on a curve $C \in |2s+3f|$ having a double point at one of them.
This translates in terms of plane curves by saying that $q_0, q_1, \dots, q_9$ cannot lie on a cubic having a double point at a $q_i$ with $i>0$.

\smallskip
A relevant consequence of \eqref{for_g} is
the ampleness or the very ampleness of $\mathcal L$ for $g \geq 2$, provided that the anticanonical bundle $-K_X$ is good enough.
In fact we have

\begin{prp} \label{veryample}
Let $(X, \mathcal L)$  be a pre-conic fibration for some
line bundle $\mathcal L$, let $F$ be a fiber, and
set $\mathcal A_m:=-K_X + mF$ for any positive integer $m$.
\begin{enumerate}
\item  If $-K_X$ is nef then $\mathcal A_m$ is ample for any $m$.
\item  If $-K_X$ is nef and big then $\mathcal A_m$ is very ample for any $m$.
\end{enumerate}
\end{prp}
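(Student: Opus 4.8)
The plan is to prove the two parts by different criteria: the Nakai--Moishezon criterion for ampleness in (1), and Reider's theorem for very ampleness in (2). Both arguments rest on three facts recorded in Section~\ref{ample}. First, a fiber $F$ is nef (being pulled back from $\mathbb P^1$) with $F^2=0$ and $-K_X\cdot F=2$. Second, every irreducible vertical curve $\Gamma$ (i.e.\ $F\cdot\Gamma=0$) is either a whole fiber, with $-K_X\cdot\Gamma=2$, or, by \eqref{singular fibers}, a $(-1)$-curve, with $-K_X\cdot\Gamma=1$; hence $-K_X\cdot\Gamma\ge1$ in either case. Third, a nef divisor has nonnegative self-intersection, so $-K_X$ nef gives $K_X^2\ge0$, and $-K_X$ big gives $K_X^2\ge1$.

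For (1) I would check Nakai--Moishezon for $\mathcal A_m=-K_X+mF$. For an irreducible curve $C$ write $\mathcal A_m\cdot C=-K_X\cdot C+m(F\cdot C)$: if $C$ is horizontal then $F\cdot C\ge1$ and the nefness of $-K_X$ give $\mathcal A_m\cdot C\ge m>0$; if $C$ is vertical then $F\cdot C=0$ and $\mathcal A_m\cdot C=-K_X\cdot C\ge1>0$ by the fact above. Finally $\mathcal A_m^2=K_X^2-2m(K_X\cdot F)+m^2F^2=K_X^2+4m\ge4m>0$. Hence $\mathcal A_m$ is ample.

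For (2) I would write $\mathcal A_m=K_X+D$ with $D:=-2K_X+mF$, which is nef (a sum of the nef classes $-K_X$ and $F$) and big (since $-K_X$ is), with $D^2=4K_X^2+8m\ge12\ge10$. Reider's theorem then gives very ampleness of $K_X+D$ unless some effective $E$ realises a forbidden pair $(D\cdot E,E^2)\in\{(0,-1),(0,-2),(1,0),(1,-1),(2,0)\}$, and I would exclude these by splitting $D\cdot E=2(-K_X)\cdot E+m(F\cdot E)$ into nonnegative integer summands. If $D\cdot E=0$ then $(-K_X)\cdot E=F\cdot E=0$; but $F\cdot E=0$ makes $E$ vertical, whence $(-K_X)\cdot E\ge1$ unless $E=0$, so the pairs $(0,-1),(0,-2)$ are vacuous. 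If $D\cdot E=1$, integrality forces $(-K_X)\cdot E=0$ and $mF\cdot E=1$, hence $m=1$ and $E$ is an irreducible section with $-K_X\cdot E=0$; the Hodge index theorem applied to the big nef class $-K_X$ gives $E^2<0$, while adjunction $E^2+K_X\cdot E=2p_a(E)-2$ with $K_X\cdot E=0$ forces $E^2$ even, so $E^2\le-2$, excluding $(1,0)$ and $(1,-1)$. If $D\cdot E=2$, then either $(-K_X)\cdot E=1$ and $F\cdot E=0$, forcing $E$ to be a single $(-1)$-curve with $E^2=-1\ne0$, or $(-K_X)\cdot E=0$, forcing $E^2<0\ne0$ by Hodge index; either way $(2,0)$ is excluded. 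Thus $\mathcal A_m$ is very ample.

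The main obstacle is the case analysis in (2): one must rule out not abstract special curves but exactly the Reider configurations, and the delicate point is excluding a horizontal section (or $(-1)$-curve) $E$ with $-K_X\cdot E=0$ and $E^2=-1$. Nefness together with Hodge index only yields $E^2<0$; it is the parity constraint from adjunction, using $K_X\cdot E=0$, that sharpens this to $E^2\le-2$ and closes the gap. Tracking which values of $m$ can produce each forbidden pair (only $m=1$ can give $D\cdot E=1$) is the other point requiring care.
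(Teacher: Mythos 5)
Your proposal is correct and follows essentially the same route as the paper: Nakai--Moishezon for (1) and Reider's theorem applied to $K_X+(-2K_X+mF)$ for (2), with the forbidden configurations excluded by the same mix of verticality, the genus-formula parity argument, and the Hodge index theorem. The only cosmetic difference is that the paper first reduces to $m=1$ via $\mathcal A_m=\mathcal A_1+(m-1)F$ with $F$ spanned, whereas you carry the general $m$ through the case analysis directly.
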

Note that $\mathcal A_{g-1}=\mathcal L$, by \eqref{for_g}.

\begin{proof}
We have $\mathcal A_m = \mathcal A_1 + (m-1)F$. So, since the line bundle corresponding to
a fiber is spanned, it is enough to prove both assertions for $\mathcal A := \mathcal A_1$
\cite[Ex.\ 7.5(d), p.\ 169]{Ha}. If $-K_X$ is nef, then $\mathcal A$ is nef, being the sum of two nef line bundles; moreover
$\mathcal A^2 = K_X^2 -2K_X \cdot F + F^2 \geq 4$, since $K_X^2 \geq 0$, $-K_X$ being nef, and
$K_X \cdot F=-2$ by the genus formula.
So, if $\mathcal A$ is not ample then there exists an irreducible curve $C \subset X$ such that
$0 = \mathcal A \cdot C = -K_X \cdot C + F \cdot C$, in view of the Nakai--Moishezon criterion.
Since both summands
are non-negative, this implies that $K_X \cdot C=F \cdot C=0$. The latter
condition says that $C$ is contained in a union of fibers, so, due to the structure of
$(X,\mathcal L)$, it is either a fiber or an irreducible component of a singular fiber,
hence a $(-1)$-curve. In both cases the equality $K_X \cdot C = 0$ cannot hold, and therefore
$\mathcal A$ is ample.

To prove the second assertion, write
$\mathcal A = K_X + M$ where $M = -2K_X+F$.
Note that $M$ is nef, so being both $-K_X$ and $F$. Moreover, $M^2 = 4K_X^2 -4 K_X \cdot F = 4 K_X^2 + 8 \geq 12$, since
$-K_X$ is also big by assumption, and $K_X \cdot F=-2$ as already observed.
We can thus apply Reider's theorem \cite[Theorem]{R}:
if $\mathcal A$ is not very ample,
then $X$ has to contain a divisor $D > 0$ such that one of the following holds:
\begin{enumerate}
\item[i)] $D \cdot M = 0$ and $D^2 = -2$ or $-1$;
\item[ii)] $D \cdot M = 1$ and $D^2 = -1$ or $0$;
\item[iii)]  $D \cdot M = 2$ and $D^2 = 0$.
\end{enumerate}
Let us prove that neither of the cases above may occur.

In case i), due to the expression of $M$, it must be $D \cdot K_X=D \cdot F=0$.
The latter condition says that $D$ is contained in a union of fibres, but this immediately leads to a contradiction:
actually, since every singular fiber consists of two $(-1)$-curves meeting
at one point,
we can write $D = \sum_{j=1}^u s_jF_j + \sum_{i=1}^v r_i\ell_i$ where
the $F_j$'s are fibers, the $\ell_i$'s are irreducible components of singular fibers and $s_1, \dots s_u, r_1, \dots r_v$ are positive integers;
then $D \cdot K_X = -2\sum_{j=1}^u s_j - \sum_{i=1}^v r_i$ cannot be zero.

In case ii), it must be $D \cdot K_X=0$ and $D \cdot F=1$. Moreover, the genus formula implies that $D^2=0$. Since $-K_X$ is big
the Hodge index theorem shows that $D$ has to be numerically trivial, but this contradicts $D \cdot F=1$.

In case iii) we get either: j) $D \cdot K_X=0$ and $D \cdot F=2$, or jj) $D \cdot (-K_X)=1$ and $D \cdot F=0$.
The same argument as in ii) rules out subcase j). On the other hand,
subcase jj) cannot occur in view of the genus formula. This concludes the proof.
\end{proof}

\section{Sectional genus two}\label{rat_g=2}

In this Section $(X,\mathcal L)$ will be a rational conic fibration with $g=2$.
First assume that $\mathcal L$ is simply an ample line bundle.
If $\mu \geq 1$, Proposition \ref{t} says that $t \leq 1$, i.e. at most one of the points $p_1, \dots , p_{\mu}$
can lie on the tautological section $s$.
Recalling the description in terms of the birational morphism $\theta:X \to \mathbb P^2$
provided in Sec.\ \ref{ample}, this can be translated into the following result, which
improves \cite[Remark 3.2]{Io} even in the setting of polarized surfaces.

\begin{prp} \label{uno solo}
If $g=2$ then one at most of the $q_i$'s can be an infinitely near point to $q_0$.
\end{prp}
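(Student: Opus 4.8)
The plan is to obtain the statement as a direct translation of Proposition \ref{t} through the blow-up $\tau:\mathbb F_1 \to \mathbb P^2$, specialized to $g=2$. The key is the dictionary already recorded in the text just before the statement: since $\tau$ is the blow-up of $\mathbb P^2$ at $q_0$ and the tautological section $s$ of $\mathbb F_1$ is precisely the exceptional divisor $\tau^{-1}(q_0)$, a point $p_i$ lies on $s$ exactly when $q_i=\tau(p_i)$ is an infinitely near point to $q_0$ (a tangent direction at $q_0$).

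First I would specialize Proposition \ref{t}. The points $p_1,\dots,p_\mu$ lie on distinct fibers of $\mathbb F_1$ by the structure result (Theorem \ref{structure}(2)), so if $t$ of them belong to the $(-1)$-section $s$, then Proposition \ref{t} gives $t \leq g-1$. For $g=2$ this reads $t \leq 1$, i.e.\ at most one of $p_1,\dots,p_\mu$ can lie on $s$.

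Then I would transport this count to $\mathbb P^2$. Under the correspondence above, $p_i \in s$ is equivalent to $q_i$ being infinitely near to $q_0$; moreover, two distinct points $p_i,p_j$ on $s$, lying as they do on distinct fibers, determine distinct tangent directions at $q_0$, hence distinct infinitely near points. The bound $t\leq 1$ therefore says exactly that at most one of the $q_i$'s $(i \geq 1)$ is infinitely near to $q_0$, which is the assertion.

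There is no genuine obstacle here; the only point deserving care is the dictionary itself, namely that ``$p_i$ lies on $s$'' and ``$q_i$ is infinitely near to $q_0$'' match up bijectively. One direction is already noted in the preceding discussion (a point on $s$ yields an infinitely near point to $q_0$), so I would simply make explicit the converse together with the injectivity coming from the distinct-fibers hypothesis, ensuring that the numerical bound $t\leq 1$ transfers to $\mathbb P^2$ without loss.
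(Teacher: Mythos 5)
Your proposal is correct and follows the same route as the paper: the paper obtains the statement precisely by specializing Proposition \ref{t} to $g=2$ (giving $t\leq 1$) and translating through the identification of the $(-1)$-section $s$ with the exceptional divisor $\tau^{-1}(q_0)$, exactly as you do. Your added remark on the bijectivity of the dictionary is a harmless elaboration of what the paper leaves implicit.
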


Furthermore,
by specializing \eqref{for_g}, we have
\begin{equation}\label{fundamental}
\mathcal L = -K_X + F,
\end{equation}
where $F$ stands for a fiber of $X$. Moreover, $d=12-\mu$ by \eqref{deg}.
Relation \eqref{fundamental} allows us to prove
the following result, which provides a partial converse to Proposition \ref{veryample} (2).

\begin{prp} \label{converse}
Let $(X,\mathcal L)$
be a rational conic fibration of sectional genus $2$ with $\mathcal L$ very ample.
Then $-K_X$ is nef and big.
\end{prp}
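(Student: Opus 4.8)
The plan is to reduce the statement to the nefness of $-K_X$, since bigness then comes for free. Indeed, by \eqref{deg} with $g=2$ we have $(-K_X)^2=K_X^2=8-\mu$, and the very ampleness of $\mathcal L$ forces $\mu\le 7$ by \eqref{d_g}; hence $(-K_X)^2\ge 1>0$, and a nef divisor of positive self-intersection is automatically big. So the whole problem is to prove that $-K_X\cdot C\ge 0$ for every irreducible curve $C\subset X$. Using \eqref{fundamental}, namely $-K_X=\mathcal L-F$, this is equivalent to the inequality $\mathcal L\cdot C\ge F\cdot C$ for all irreducible $C$, which is a very convenient reformulation because $\mathcal L$ is very ample, so $\mathcal L\cdot C\ge 1$ always, with equality exactly when $C$ is a line of the embedding.

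First I would dispose of the vertical curves. If $C$ is contained in a union of fibers it is either a fiber, with $-K_X\cdot F=2$, or an irreducible component $\ell$ of a singular fiber, a $(-1)$-curve with $-K_X\cdot\ell=1$; in both cases $-K_X\cdot C>0$. So I may assume $C$ is horizontal, say $F\cdot C=k\ge 1$, and the failure of nefness would mean $\mathcal L\cdot C<k$. To control this I would pass to the plane model $\theta=\tau\circ\eta:X\to\mathbb P^2$, under which $-K_X=3h-\sum_{i=0}^{\mu}E_i$, where $h=\theta^*\mathcal O_{\mathbb P^2}(1)$ and $E_0,\dots,E_\mu$ are the exceptional divisors over $q_0,q_1,\dots,q_\mu$. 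A horizontal $C$ with $-K_X\cdot C<0$ must then be the proper transform either of a line through at least four of the points $q_0,\dots,q_\mu$, for which $-K_X\cdot(h-\sum_{i\in S}E_i)=3-|S|$, or of a conic through at least seven of them, for which $-K_X\cdot(2h-\sum_{i\in S}E_i)=6-|S|$ (or of one of their infinitely near analogues).

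The ampleness restrictions already in hand eliminate almost all of these. Since $q_0$ cannot be collinear with two other $q_i$, any line through $q_0$ carries at most two of the points, while a line avoiding $q_0$ carries at most three of $q_1,\dots,q_\mu$ by Remark 3; thus no line meets four of the points and the line classes are harmless. Likewise a conic through $q_0$ contains at most six of the points by Remark 3, and Proposition \ref{uno solo} rules out the infinitely near configurations. The one surviving obstruction is an irreducible conic through seven of $q_1,\dots,q_\mu$, which forces $\mu=7$; its proper transform $\Gamma$ satisfies
\[
\mathcal L=\Gamma+2F,\qquad \Gamma\cong\mathbb P^1,\qquad \Gamma^2=-3,\qquad \mathcal L\cdot\Gamma=1,\qquad F\cdot\Gamma=2,
\]
so that $-K_X\cdot\Gamma=-1$. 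Everything therefore comes down to excluding this single configuration under the hypothesis that $\mathcal L$ is very ample; this is the main obstacle, and it is precisely the point where very ampleness must be used in an essential way (mere ampleness does not exclude it, as noted before Remark 4).

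To carry out this last exclusion I would exploit that $\Gamma$ would be simultaneously a line of the embedding and a bisection of the conic fibration. Each general fiber, a smooth conic spanning a plane in $\mathbb P^N=\mathbb P^4$, meets the line $\Gamma$ in two points, so $\Gamma$ is forced to lie in the plane spanned by every fiber; consequently all the fiber-planes contain the fixed line $\Gamma$, and $X$ is constrained to lie on the resulting one-parameter scroll of planes through $\Gamma$. I expect this strong geometric degeneracy to be incompatible with $\mathcal L$ being very ample: concretely, I would show that $|\mathcal L|$ fails to separate the two points in which a general fiber meets $\Gamma$ (equivalently, that $h^0(\mathcal L)-h^0(\mathcal L-Z)\ne 2$ for a suitable length-two subscheme $Z$ supported on $\Gamma$), contradicting very ampleness. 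Establishing this failure rigorously is the delicate step; once it is in place, no irreducible curve has $-K_X\cdot C<0$, so $-K_X$ is nef, and hence nef and big.
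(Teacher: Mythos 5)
Your reduction to nefness (bigness being automatic from $K_X^2=8-\mu\ge 1$), your treatment of the vertical curves, and your identification of the one surviving configuration --- seven of the $q_i$'s on an irreducible conic missing $q_0$, i.e.\ the case \eqref{special} --- all agree with what the paper establishes. But the proof is not complete, and the missing piece is precisely the mathematical content of the proposition. You yourself flag it: excluding the proper transform $\Gamma$ of that conic ``is the delicate step,'' and you only state an expectation that $|\mathcal L|$ should fail to separate the two points of $\Gamma\cap F$, without producing the contradiction. Since this configuration is compatible with $\mathcal L$ ample (cf.\ Remark~4), this is the unique point where very ampleness must actually be used, so deferring it leaves the statement unproved. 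The paper closes the step by a different, global argument: from $\mathcal L\cdot C=1$ and $F\cdot C=2$ it deduces that $C$ is a line of the embedding contained in the plane $\langle F\rangle$ of every fiber, so that these planes form a pencil with axis $C$ and $X$ lies in a $\mathbb P^3$; but the only smooth surface in $\mathbb P^3$ carrying a conic fibration is the cubic surface, which has $g=1\neq 2$. Your observation that $\Gamma$ lies in every fiber-plane is exactly the right starting point --- you stop one step short of the contradiction.

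There is also a secondary gap earlier on: you assert that a horizontal irreducible $C$ with $-K_X\cdot C<0$ must be the proper transform of a line through at least four of the points or of a conic through at least seven. That is true, but it is Demazure's classification (equivalently, the content of Proposition \ref{more generally}), not something immediate; a priori $C$ could be a plane curve of higher degree with large multiplicity at $q_0$ and double points at the $q_i$'s, and ruling these out takes the genus computation carried out in the proof of Proposition \ref{more generally}. The paper's proof of Proposition \ref{converse} sidesteps the classification entirely: since $h^0(\mathcal L)\ge 12-\mu\ge 5$, the system $|-K_X|=|\mathcal L-F|$ is at least a pencil, so any nef-violating irreducible $C$ is a fixed component of it, giving $C\cdot F\le(\mathcal L-F)\cdot F=2$; combined with $1\le\mathcal L\cdot C<C\cdot F$ this forces $\mathcal L\cdot C=1$ and $C\cdot F=2$ with no case analysis. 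If you keep your plane-model route, you should either cite \cite{De} for the list of obstructing curves or supply the argument of Proposition \ref{more generally}.
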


\begin{proof} If $X = \mathbb F_e$, with $e \leq 2$, then $-K_X$ is obviously ample for $e \leq 1$, and nef and big for $e=2$.
So, according to Theorem \ref{structure} and Remark 2, we can suppose that $X$ is obtained via a blow-up $\eta:X \to \mathbb F_1$
at $1 \leq \mu \leq 7$
points lying on distinct fibers, one at most, say $p_1$, belonging to the minimal section $s$ of $\mathbb F_1$,
and the line bundle $\mathcal L$ is given by $\mathcal L = \eta^*[2s+4f]- \sum_{i=1}^{\mu}e_i$.
Recalling \eqref{h0}, we get $h^0(2s+4f)= 12$.
Thus $h^0(\mathcal L) \geq h^0(2s+4f)- \mu \geq 5$, because $\mu \leq 7$.
Since $\mathcal L \cdot F=2$, we thus get
$\dim(|\mathcal L - F|)= \dim (|\mathcal L|)-3 \geq 1$ and then, due to \eqref{fundamental},
$|-K_X|=|\mathcal L - F|$ is at least a pencil.
Clearly, $K_X^2 = K_{\mathbb P^2}^2-\mu - 1 = 8 - \mu \geq 1$, hence $-K_X$ is big
provided that it is nef.
Suppose, that $-K_X$ is not nef.
Then there exists an irreducible curve $C \subset X$ such that
$C \cdot (\mathcal L - F) = C \cdot (-K_X) < 0$, i.e.,
\begin{equation}\notag
C \cdot \mathcal L < C \cdot F.
\end{equation}
As $C \cdot D < 0$ for every $D \in |-K_X|=|\mathcal L - F|$, necessarily the curve $C$ has to be contained
in the fixed part of $|\mathcal L - F|$, so we can write $|\mathcal L - F| = C + |R|$,
where $R$ is an effective divisor. Thus
\begin{equation}\notag
2 = \mathcal L \cdot F = (\mathcal L - F) \cdot F = C \cdot F + R \cdot F \geq C \cdot F,
\end{equation}
because $F$ is nef. On the other hand, the
ampleness of $\mathcal L$ implies $C \cdot \mathcal L \geq 1$.
Combining this with the above inequalities we get
$$1 \leq C \cdot \mathcal L < C \cdot F \leq 2,$$
and this in turn implies
\begin{equation}\label{exceptional case}
C \cdot \mathcal L = 1 \qquad \text{\rm{and}} \qquad C \cdot F = 2.
\end{equation}
We claim that this exceptional situation cannot happen
$\mathcal L$ being very ample. Suppose \eqref{exceptional case} holds.
Then $C$ is a line on
$X \subset \mathbb P^N$ embedded by $|\mathcal L|$; moreover, $C$ is contained in the linear span
$\langle F \rangle \subset \mathbb P^N$ of every fiber $F$, since it intersects every fiber at two points. As a consequence,
the planes $\langle F \rangle$ constitute a pencil whose axis
is the line $C$ itself, and so $X$ is contained in the $\mathbb P^3$ spanned by that pencil.
But the only smooth surface in $\mathbb P^3$ admitting a structure of conic fibration is
the cubic surface, and this is not compatible with $g=2$. Therefore \eqref{exceptional case} cannot
occur.
\end{proof}

More generally, we have
\begin{prp} \label{more generally}
Let $(X,\mathcal L)$
be a rational conic fibration of sectional genus $2$ and suppose that $\mathcal L$ is ample and spanned.
Then either
\begin{enumerate}
\item[a)] $8 \leq \mu \leq 9$, or
\item[b)] $-K_X$ is nef and big, unless
\begin{equation}\label{special}
\mu=7, t=0,\ \text{and}\ p_1, \dots, p_7\ \text{belong to an irreducible
curve of}\ |2s+2f|.
\end{equation}
\end{enumerate}
\end{prp}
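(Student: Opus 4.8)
The plan is to run the argument of Proposition~\ref{converse} as far as possible without using very ampleness, and then to isolate the single residual case by a numerical analysis of the curve that obstructs nefness. First I would dispose of the boundary cases. By Remark~2 we have $\mu\le 9$, and $K_X^2=8-\mu$, so $(-K_X)^2>0$ precisely when $\mu\le 7$; since a nef divisor of positive self-intersection is big, it suffices to establish the nefness of $-K_X$ when $\mu\le 7$, the range $8\le\mu\le 9$ being recorded as alternative a). The subcase $\mu=0$, where $X=\mathbb F_e$ with $e\le 2$ by Theorem~\ref{structure}, is immediate, since $-K_X=[2s+(e+2)f]$ is then nef and big. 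So assume $1\le\mu\le 7$ and $\mathcal L=\eta^*[2s+4f]-\sum_{i=1}^{\mu}e_i$; by \eqref{h0} we have $h^0(\mathcal L)\ge 12-\mu\ge 5$, and since $\mathcal L\cdot F=2$ this gives $\dim|-K_X|=\dim|\mathcal L-F|\ge\dim|\mathcal L|-3\ge 1$, so that $|-K_X|$ is at least a pencil.

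Next I would reproduce the first half of the proof of Proposition~\ref{converse}. If $-K_X$ were not nef there would be an irreducible curve $C$ with $C\cdot(-K_X)<0$; since then $C\cdot D<0$ for every $D\in|-K_X|=|\mathcal L-F|$, the curve $C$ would lie in the fixed part, say $\mathcal L-F=C+R$ with $R$ effective. The relation $(\mathcal L-F)\cdot F=2=C\cdot F+R\cdot F$, together with $F$ nef and $\mathcal L$ ample, then forces the exceptional values \eqref{exceptional case}, i.e.\ $C\cdot\mathcal L=1$ and $C\cdot F=2$. At this point the very-ample proof excludes this outright; here instead I would show that \eqref{exceptional case} can occur only in the configuration \eqref{special}.

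The core is a class computation for $C$. From \eqref{exceptional case} and \eqref{fundamental} one gets $C\cdot K_X=1$, whence $C^2=2p_a(C)-3\ge -3$. Put $\bar C:=\eta(C)$, of class $2s+bf$ (the $s$-coefficient being $\bar C\cdot f=C\cdot F=2$), and $m_i:=\mathrm{mult}_{p_i}\bar C$. Irreducibility of $\bar C$ gives $\bar C\cdot s=b-2\ge 0$, so $b\ge 2$, while $\bar C\cdot f=2$ gives $m_i\le 2$. Writing $C=\eta^*\bar C-\sum_i m_i e_i$ and intersecting with $\mathcal L$ and with $C$ yields
\begin{equation}\notag
\sum_i m_i=2b+3,\qquad \sum_i m_i^2=4b-4-C^2\le 4b-1.
\end{equation}
Since $m_i^2-m_i\in\{0,2\}$, subtracting shows that at most $b-2$ of the $m_i$ equal $2$, hence at least $(2b+3)-2(b-2)=7$ of them equal $1$; as no more than $\mu\le 7$ of the $m_i$ are nonzero, this forces $\mu=7$, $b=2$ and $m_1=\dots=m_7=1$. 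Thus $p_1,\dots,p_7$ lie on an irreducible curve $\bar C\in|2s+2f|=|2(s+f)|$, and by Remark~1 such a curve is disjoint from $s$; hence no $p_i$ lies on $s$, i.e.\ $t=0$. This is exactly \eqref{special}.

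I expect the short numerical optimization of the third paragraph to be the only real obstacle; everything preceding it transcribes Proposition~\ref{converse}. As a sanity check, the outcome agrees with Remark~3: via $\theta$ the curve $\bar C$ is an irreducible conic through $q_1,\dots,q_7$ and missing $q_0$, which is precisely the borderline configuration there noted to be compatible with ampleness but not with very ampleness. One may further observe that \eqref{special} imposes a positive-codimension condition on the points, since $\dim|2(s+f)|=5<7$; so for seven points in sufficiently general position $-K_X$ is nef and big, as alternative b) asserts.
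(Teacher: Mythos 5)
Your proof is correct and follows essentially the same route as the paper's: rerun the argument of Proposition~\ref{converse} up to \eqref{exceptional case}, then determine the class of $\eta(C)$ and the multiplicities at the $p_i$ numerically to land exactly on \eqref{special}. The only (harmless) deviation is in the final step, where the paper uses spannedness to conclude that $C$ is smooth rational, so that $\gamma$ has exactly $p_a(\gamma)=b-2$ double points and $1=C\cdot\mathcal L=8-\varepsilon$ directly, whereas you only use $p_a(C)\ge 0$ to bound $\sum_i m_i^2$; both give $\mu=\varepsilon=7$, $b=2$ and $t=0$.
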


\begin{proof}
By Remark 2 we know that $\mu \leq 9$ since $\mathcal L$ is ample and spanned. Suppose we are not in case a).
Then $\mu \leq 7$ and so we can argue as in the
part of the proof of Proposition \ref{converse}
leading to formula \eqref{exceptional case} under the assumption that $-K_X$ is not nef.
Then, $C$ is a bisecant of the fibration. Moreover, the fact that $\mathcal L_C$ is an ample and spanned line bundle
of degree $1$ on $C$ implies that $C$ is a smooth rational curve.
Consider the birational morphism $\eta:X \to \mathbb F_1$ again and set $\gamma := \eta(C)$;
then $\gamma \subset \mathbb F_1$ is an irreducible curve. We can write
$\gamma \sim as+bf$ for some integers $a$ and $b$, and $C = \eta^*\gamma - \sum_{i=1}^{\mu}\nu_i e_i$,
for suitable non-negative integers $\nu_i=\text{\rm{mult}}_{p_i}(\gamma)$. As $F=\eta^*f$, we have
$$2=C \cdot F = \big(\eta ^*\gamma - \sum_{i=1}^{\mu}\nu_i e_i\big) \cdot \eta^*f = \gamma \cdot f = a,$$
and this implies $0 \leq \nu_i \leq 2$ for every $i=1, \dots, \mu$. So, if $\gamma$ is singular, it
can have at worst double points at some of the $p_i$'s and since $C$ is rational,
their number is exactly $p_a(\gamma)$. By the genus formula we have
$$2p_a(\gamma)-2 = \gamma \cdot (K_{\mathbb F_1}+\gamma) = (2s+bf)\cdot (b-3)f = 2(b-3),$$
hence $p_a(\gamma)=b-2$. Let $\varepsilon$ be the number of the $p_i$'s for which $\nu_i=1$.
Then $b-2 + \varepsilon \leq \mu \leq 7$. Thus,
we get
$$1=C \cdot \mathcal L = \big(\eta^*[2s+bf]-\sum_{i=1}^{\mu}\nu_i e_i\big) \cdot \big(\eta^*[2s+4f]- \sum_{i=1}^{\mu} e_i\big)$$
$$=-4+8+2b-2(b-2)-\varepsilon = 8-\varepsilon.$$
Therefore $7 = \varepsilon = \mu$ and $b-2=0$. So, $\gamma$ is a smooth curve
of $|2s+2f|$ containing all the seven $p_i$'s. In particular, $t=0$ in view of Remark 1, and then
assertion b) in the statement is proved.
\end{proof}

\noindent{\it{Remark}} 4. The special case \eqref{special} arising in Proposition \ref{more generally} is an effective exception to the nefness of $-K_X$. To see this,
referring to the blow-up $\tau:\mathbb F_1 \to \mathbb P^2$ and to $\theta=\tau \circ \eta:X \to \mathbb P^2$,
consider that $[2s+2f] = \tau^* \mathcal O_{\mathbb P^2}(2)$.
Then the exceptional situation in \eqref{special}
corresponds to the fact that the seven points $q_i=\tau(p_i)$ ($i=1, \dots , 7$) of $\mathbb P^2$
lie on an irreducible conic. According to \cite[Th\'eor\`eme 1, p.\ 39]{De}, this fact prevents $-K_X$ from being nef (and hence $\mathcal L$ from being very ample).

\smallskip

Here is an easy application of \eqref{fundamental} and Proposition \ref{converse}. Clearly, in case $t=1$, the proper transform
of the tautological section $s$ via $\eta:X \to \mathbb F_1$ is a $(-2)$-curve which is a line of $(X,\mathcal L)$ and
a section of $X$. In fact we can say more.

\begin{prp} \label{lines}
Let $X \subset \mathbb P^N$ be a rational conic fibration of sectional genus $2$ with $\mu >0$, let
$\eta:X \to \mathbb F_1$ be the morphism expressing $X$ as the blow-up of $\mathbb F_1$ at $p_1, \dots, p_{\mu}$,
and let $C \subset X$ be a line.
Then, either
\begin{enumerate}
\item[1)] $C$ is a component of a singular fiber of $X$, or
\item[2)] $C$ is a $(-2)$-curve and a section of $X$.
\end{enumerate}
\end{prp}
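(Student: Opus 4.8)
The plan is to exploit the numerical relation \eqref{fundamental}, namely $\mathcal L = -K_X + F$, together with the nefness of $-K_X$ furnished by Proposition \ref{converse}. Since $X$ is embedded by $|\mathcal L|$ and $C$ is a line, $C$ is a smooth rational curve with $\mathcal L \cdot C = 1$. First I would record that both the anticanonical class and the fiber class are nef: $-K_X$ by Proposition \ref{converse} (as $\mathcal L$ is very ample), and $F$ because it moves in the pencil $|F|$ pulled back from $\mathbb P^1$.

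The key step is then a positivity computation for $\mathcal L \cdot C$. Writing $1 = \mathcal L \cdot C = (-K_X)\cdot C + F \cdot C$ and using that both summands are non-negative integers, I obtain $F \cdot C \in \{0,1\}$, with $(-K_X)\cdot C$ equal to $1$ or $0$ correspondingly. This dichotomy is exactly what produces the two alternatives in the statement, so the whole proof reduces to analysing these two cases.

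In the case $F \cdot C = 0$, the curve $C$ is contained in a single fiber (otherwise $\pi|_C$ would be dominant, forcing $F\cdot C \geq 1$); since $\mathcal L \cdot C = 1 \neq 2$, it cannot be a whole smooth fiber, so by the structure \eqref{singular fibers} of the reducible fibers it must be one of their two $(-1)$-components, which is alternative~1) (here the hypothesis $\mu>0$ guarantees that singular fibers exist). In the case $F \cdot C = 1$, the curve $C$ is a section of $\pi$ and satisfies $(-K_X)\cdot C = 0$, i.e. $K_X \cdot C = 0$; inserting $p_a(C)=0$ into the genus formula $2p_a(C)-2 = C^2 + K_X\cdot C$ yields $C^2 = -2$, so $C$ is a $(-2)$-curve which is a section, giving alternative~2).

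I expect no serious obstacle: once Proposition \ref{converse} is available, the argument is a short combination of positivity and the genus formula. The only point requiring care is ruling out $F \cdot C \geq 2$, and this is precisely where the nefness of $-K_X$ is indispensable—without it the value $F\cdot C = 2$ (a bisecant line) could not be excluded, and indeed this is the very exceptional configuration that Proposition \ref{converse} eliminates using the very ampleness of $\mathcal L$.
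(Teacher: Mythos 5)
Your argument is correct and is essentially the paper's own proof: both decompose $1=\mathcal L\cdot C=(-K_X)\cdot C+F\cdot C$, invoke the nefness of $F$ and of $-K_X$ (the latter via Proposition \ref{converse}), and read off the two alternatives from the resulting dichotomy. Your write-up merely makes explicit the case analysis that the paper leaves to the reader (the genus-formula computation of $C^2=-2$ and the identification of $C$ with a component of a singular fiber when $F\cdot C=0$), which is a faithful filling-in rather than a different route.
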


\begin{proof} Let $\mathcal L$ be the hyperplane bundle of $X$ and let $C \subset X$ be a line of $(X,\mathcal L)$.
In view of the relation \eqref{fundamental} we have $1= C \cdot \mathcal L = C\cdot (-K_X) + C \cdot F$.
Clearly, $F$ is nef. Moreover, $-K_X$ is also nef in view of Proposition \ref{converse}, then either
\begin{enumerate}
\item[i)] $C \cdot K_X=-1$ and $C \cdot F=0$, or
\item[ii)] $C\cdot K_X=0$ and $C \cdot F=1$,
\end{enumerate}
and in these two cases $C$ is as in 1) and 2) respectively.
\end{proof}

\section{Characterizing the very ampleness of $\mathcal L$ for $g=2$}

Notation as in Section \ref{ample}. Let $\eta:X \to \mathbb F_1$ be the blowing up of $\mathbb F_1$ at
points $p_1, \dots, p_{\mu}$ with $\mu \leq 11$, lying on distinct fibers, and let $\mathcal L = \eta^* [2s+4f]-\sum_{i=1}^{\mu} e_i$.
Since  $g([2s+4f])=2 > 0$, if $p_1, \dots , p_{\mu}$ are general, then $\mathcal L$ is ample, according
to Yokoyama \cite[Theorem, 1.8]{Y}. Then $(X,\mathcal L)$ is a polarized rational conic fibration of
sectional genus 2. On the other hand, if $p_1, \dots , p_{\mu}$ are not general,
the ``bad curves'' containing some of them
and preventing $\mathcal L$ from being ample include those mentioned in Section \ref{ample}, but
there could be more. In this Section, as a consequence of Proposition \ref{veryample} and Proposition
\ref{converse} we succeed to describe precisely the conditions on the location of
$p_1, \dots , p_{\mu}$ characterizing the very ampleness of $\mathcal L$ for $\mu \leq 7$. To do that we need
to translate the condition that $q_0, q_1, \dots ,q_{\mu}$ are in
almost general position according to \cite[p.\ 39]{De} in terms of the points $p_1, \dots , p_{\mu}$
on $\mathbb F_1$. The relevant fact is that, while the same conditions are obviously sufficient
for the mere ampleness of $\mathcal L$, they turn out also to be necessary except in the geometric situation
specified by \eqref{special}.

So, let $(X,\mathcal L)$ be a rational conic fibration of sectional genus $g=2$ and consider the
blow-up $\theta: X \to \mathbb P^2$ at $q_0, q_1, \dots, q_{\mu}$ where $\mu \leq 7$ (we can assume this
fact since it is a necessary condition for the very ampleness of $\mathcal L$; see Remark 2).
According to \cite[Th\'eor\`eme 1, equivalence of conditions (a) and (d), p.\ 39]{De}, $-K_X$ is nef
if and only if $q_0, q_1, \dots, q_{\mu}$ are in almost general position (see \cite[Definition 1, p.\ 39]{De}).
Note that property $(*)$ in \cite[point b) p.\ 38]{De} is certainly satisfied in the present case
in view of Proposition \ref{uno solo}. This means that the first requirement in \cite[Definition 1, p.\ 39]{De}
is fulfilled, and therefore the points $q_0, q_1, \dots, q_{\mu}$ are in almost general position if and only if
they are
\begin{enumerate}
\item[i)] no four on a line, and
\item[ii)] no seven on a conic.
\end{enumerate}
Before translating these conditions in terms of
the points $p_1, \dots,  p_{\mu}$ on $\mathbb F_1$, we note that if $t=1$ (i.e. $q_1$ is infinitely near to $q_0$), then
a line or a conic through $q_1$ has to contain $q_0$.

So, let us translate the complementary situation of i).
\begin{enumerate}

\item[i-1)] suppose that $q_0,q_{i_1}, q_{i_2}, q_{i_3}$ lie on a line.
This means that the three points $p_{i_1}, p_{i_2}, p_{i_3}$ belong to the same fiber of $\mathbb F_1$ (which is not the case since
$\mathcal L$ is ample);
\item[i-2)] suppose that $q_{i_1}, q_{i_2}, q_{i_3}, q_{i_4}$ are on a line, the $4$-tuple not including $q_0$ (and then not even
$q_1$ if $t=1$). This means that the four points $p_{i_1}, p_{i_2}, p_{i_3}, p_{i_4}$
belong to an irreducible curve in the linear system $|s+f|$ (but this contradicts the ampleness of $\mathcal L$: immediate check).
\end{enumerate}

Therefore no four of the $p_i$'s can lie on an irreducible curve in $|s+f|$. However,
three can lie on such a curve if either $t=0$, or $t=1$ provided that $p_1$ is not one of them (recalling Remark 1).

\smallskip
Next let us translate the complementary situation of ii). Of course this requires $\mu=6$ or $7$.
\begin{enumerate}
\item[ii-1)] suppose that the conic contains $q_0$. This means that six of the points $p_i$ belong to an irreducible curve in the
linear system $|s+2f|$ (an immediate check shows that this contradicts the ampleness of $\mathcal L$);
\item[ii-2)] suppose that the conic misses $q_0$ (and then, necessarily, $t=0$);
this means that the seven points $p_1, \dots, p_7$ belong to an irreducible
curve in the linear system $|2s+2f|$ (this is the special case \eqref{special}) in Proposition \ref{more generally} b), which does not contradict the ampleness of $\mathcal L$).
\end{enumerate}

Here is the main result.

\begin{thm}\label{restrictions}
Let $(X,\mathcal L)$ be a rational conic fibration of sectional genus $2$ with $\mathcal L$ ample and $\mu \leq 7$ singular fibers.
\begin{enumerate}
\item[j)]
Then $(X, \mathcal L)$ is as in point $(2)$ of Theorem $\ref{structure}$, with $g=2$; moreover,
the points $p_1, \dots, p_{\mu}$ satisfy the following conditions: at most one can lie on $s$; no four are on an irreducible curve in the linear system $|s+f|$, and
no six are on an irreducible curve in $|s+2f|$.
Conversely, if these conditions are satisfied for a pre-conic fibration $(X,\mathcal L)$ with $g=2$, then $\mathcal L$ is ample.
\item[jj)] Moreover, if all the above conditions are satisfied, then $\mathcal L$ is spanned.
\item[jjj)] Furthermore, $\mathcal L$ is very ample if and only if the above conditions are satisfied and, in addition, in case $\mu=7$, $t=0$ and
the points $p_1, \dots ,p_{\mu}$ do not belong to an irreducible curve in $|2s+2f|$.
\end{enumerate}
\end{thm}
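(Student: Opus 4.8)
The plan is to treat the three assertions in turn, using throughout the identity $\mathcal{L}=-K_X+F$ of \eqref{fundamental}, the dictionary between the $p_i$ on $\mathbb{F}_1$ and the $q_i=\theta(p_i)$ on $\mathbb{P}^2$, and Demazure's criterion \cite{De}. By Theorem \ref{structure} and Remark 2 I may assume $1\le\mu\le7$ and that $\eta:X\to\mathbb{F}_1$ is a blow-up, the case $\mu=0$ being the explicit list of $\mathbb{F}_e$, $e\le 2$, for which $-K_X$ is visibly nef and big. The necessity of the three conditions in j) is then immediate: $t\le1$ is Proposition \ref{t} for $g=2$, while \emph{no four on an irreducible $|s+f|$} and \emph{no six on an irreducible $|s+2f|$} are the cases $\beta=1,2$ of Remark 3.

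For the sufficiency in j) I would invoke the translations established before the statement: the three conditions are exactly equivalent to $q_0,\dots,q_\mu$ being in almost general position, save that they do not rule out the configuration \eqref{special}. Hence, if \eqref{special} does not occur, $-K_X$ is nef by \cite{De} and $\mathcal{L}=\mathcal{A}_1$ is ample by Proposition \ref{veryample}(1). When \eqref{special} does occur, $-K_X$ is no longer nef (Remark 4), so I would verify ampleness by Nakai--Moishezon directly: $\mathcal{L}^2=12-\mu>0$, and for an irreducible curve $C$ with $\gamma:=\eta(C)=[as+bf]$ and $\nu_i=\mathrm{mult}_{p_i}(\gamma)$ one has $\mathcal{L}\cdot C=2(a+b)-\sum_i\nu_i$. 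The critical curve is the proper transform $C_0$ of the conic $\gamma_0\in|2s+2f|$: since $p_a(\gamma_0)=0$ every $\nu_i\le1$, whence $\mathcal{L}\cdot C_0=8-7=1>0$; the remaining curves are controlled by the genus bound $\sum_i\binom{\nu_i}{2}\le p_a(\gamma)$ together with $\mu\le7$, splitting on $a=0,1,2,\ge3$. This exceptional configuration is the main obstacle throughout: it is the one place where the clean implication ``$-K_X$ nef $\Rightarrow\mathcal{L}$ ample'' fails and a hands-on positivity estimate cannot be avoided.

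Part jjj) is then a direct assembly. If $\mathcal{L}$ is very ample, Proposition \ref{converse} gives $-K_X$ nef and big; nefness yields almost general position, hence the three conditions, and it excludes \eqref{special} by Remark 4. Conversely, if the conditions hold and \eqref{special} does not occur, then $q_0,\dots,q_\mu$ are in almost general position, so $-K_X$ is nef by \cite{De}; as $K_X^2=8-\mu\ge1$, $-K_X$ is moreover big, and Proposition \ref{veryample}(2) applied to $\mathcal{A}_1=\mathcal{L}$ yields very ampleness.

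Finally, for jj) it suffices to prove spannedness when \eqref{special} holds, since every other admissible configuration is very ample by jjj). I would apply Reider's base-point-freeness criterion \cite{R} to $\mathcal{L}=K_X+D$ with $D:=-2K_X+F=2\mathcal{L}-F$. Here $D^2=4K_X^2+8\ge12\ge5$, and $D$ is nef: writing $D=\mathcal{L}+(-K_X)$ and recalling from the analysis in the proofs of Propositions \ref{converse} and \ref{more generally} that in the situation \eqref{special} the unique $(-K_X)$-negative irreducible curve is $C_0$, one finds $D\cdot C_0=\mathcal{L}\cdot C_0-1=0$ while $D\cdot C\ge\mathcal{L}\cdot C>0$ for every other irreducible $C$. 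A Reider exceptional divisor $E$ would satisfy $D\cdot E=0,\ E^2=-1$ or $D\cdot E=1,\ E^2=0$; the first forces $E=mC_0$, incompatible with $E^2=-3m^2=-1$, and the second is excluded by the same $\mathcal{L}$-positivity, the bound $C_0^2=-3$, and the fact that $\gamma_0$ meets a curve of class $[as+bf]$ in only $2b$ points. This Reider step parallels the proof of Proposition \ref{veryample}, the single new ingredient being the presence of the lone negative curve $C_0$, so once again the geometry of \eqref{special} is where the effort concentrates.
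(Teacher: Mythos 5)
Your proposal is correct in substance and its skeleton (necessity via Proposition \ref{t} and Remark 3; Demazure's almost-general-position criterion to get $-K_X$ nef; Propositions \ref{veryample} and \ref{converse} for jjj)) coincides with the paper's. It departs from the paper in two places, both in the direction of greater self-containedness. First, for the ampleness of $\mathcal L$ in the exceptional configuration \eqref{special}, where $-K_X$ fails to be nef, the paper contents itself with the degree computations of Remark 3 and the assertion after the theorem that \eqref{special} ``does not prevent $\mathcal L$ from being ample''; you instead run Nakai--Moishezon directly, and your computation $\mathcal L\cdot C_0=8-7=1$ together with the genus/double-point bound for the remaining classes $[as+bf]$ is a complete (if compressed) verification. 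Second, for jj) the paper simply quotes Bese's sufficient conditions for spannedness \cite[Theorem 4.1]{Be}, whereas you reduce to the case \eqref{special} and apply Reider's base-point-freeness criterion to $\mathcal L=K_X+D$ with $D=-2K_X+F$; this is a genuinely different argument, and it has the merit of not leaning on an external result whose very-ampleness counterpart the paper explicitly distrusts (Remark 5). One point to tighten: when you assert that $C_0$ is the \emph{unique} $(-K_X)$-negative irreducible curve, you appeal to the proof of Proposition \ref{more generally}, which assumes $\mathcal L$ ample \emph{and spanned} (spannedness is used there to conclude that the bisecant $C$ is smooth rational), so quoting it verbatim inside a proof of spannedness would be circular. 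The circularity is easily removed: the identification of the negative curve only needs $C\cdot\mathcal L=1$ and $C\cdot F=2$ (obtained from ampleness alone, as in the proof of Proposition \ref{converse}) together with the bound $\delta\le p_a(\gamma)=b-2$ on the number of double points of $\gamma=\eta(C)\in|2s+bf|$, which forces $b=2$, all $\nu_i=1$ and $\varepsilon=\mu=7$; uniqueness then follows because two such irreducible conics would meet in $(2s+2f)^2=4<7$ points. With that repair the argument is sound.
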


\begin{proof} j) follows from the above discussion, taking also into account Proposition \ref{t}.
Assertions jj) and jjj) are trivial if $\mu=0$. So let $\mu \geq 1$ and consider the blow-up $\eta: X \to \mathbb F_1$ again. Since $\mathcal L = \eta^*[2s+4f]-\sum_{i=1}^{\mu}e_i$,
we see from \cite[Theorem 4.1]{Be} that the sufficient conditions for the spannedness of $\mathcal L$ are weaker than those listed in j); this implies jj).
Finally, jjj) follows from Proposition \ref{veryample} (2) and Proposition \ref{converse}, taking into account jj) and Proposition \ref{more generally}.
\end{proof}

Note that the conditions in j) are also sufficient for the ampleness of $\mathcal L$ when $\mu \leq 6$. Actually, the ampleness of $\mathcal L$ follows
from Proposition \ref{veryample} (1) and \cite[Th\'eor\`eme 1]{De}, since the conditions in j) express the nefness of $-K_X$ for $\mu \leq 6$. On the other hand, if $\mu = 7$, then either
$-K_X$ is nef and then $\mathcal L$ is ample, or $(X,\mathcal L)$ is as in \eqref{special} (which, however does not prevent $\mathcal L$ from being ample.

\begin{cor} Let $(X,\mathcal L)$ be a polarized rational conic fibration with $g=2$. If $\mu \leq 7$ then $\mathcal L$ is very ample, except
if \eqref{special} holds.
\end{cor}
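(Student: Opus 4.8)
The plan is to read the corollary directly off Theorem \ref{restrictions}, observing that its hypotheses are already in force: a polarization $(X,\mathcal L)$ has $\mathcal L$ ample by definition, and $g=2$, $\mu\le 7$ are assumed. I would split the argument according to whether $\mu=0$.

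If $\mu=0$, then by Theorem \ref{structure}(1) we have $(X,\mathcal L)=(\mathbb F_e,[2s+(e+3)f])$ with $0\le e\le 2$. Here the coefficients are $\alpha=2>0$ and $\beta=e+3>2e$, so by the Segre--Hirzebruch criterion recalled in Section \ref{back}, where ampleness and very ampleness coincide on $\mathbb F_e$, the bundle $\mathcal L$ is very ample. Since the exceptional configuration \eqref{special} forces $\mu=7$, this minimal case is subsumed under the asserted conclusion and needs no further discussion.

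For $1\le\mu\le 7$ the point is that, $\mathcal L$ being ample, part j) of Theorem \ref{restrictions} already guarantees that the three location conditions on $p_1,\dots,p_\mu$ (at most one on $s$; no four on an irreducible member of $|s+f|$; no six on an irreducible member of $|s+2f|$) hold automatically, since these are exactly the necessary consequences of ampleness established there. Consequently the equivalence in part jjj) collapses to the single clause: $\mathcal L$ is very ample if and only if, in addition, we are not in the situation \eqref{special}, which in turn can arise only when $\mu=7$ and $t=0$. Reading this contrapositively yields precisely the statement of the corollary.

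I expect no genuine obstacle here, as the substantive work has been carried out upstream in Theorem \ref{restrictions}, resting ultimately on Proposition \ref{veryample}(2), Proposition \ref{converse}, Proposition \ref{more generally}, and the translation of Demazure's almost-general-position condition into statements about $\mathbb F_1$. The only step requiring a separate word is the boundary case $\mu=0$, dispatched above via the Segre--Hirzebruch criterion; everything else is a formal specialization of the ``if and only if'' in jjj) to the case where ampleness is already granted.
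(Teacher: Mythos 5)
Your proposal is correct and follows exactly the route the paper intends: the corollary is stated without proof as an immediate consequence of Theorem \ref{restrictions}, since ampleness already forces the location conditions of part j), so part jjj) reduces to excluding \eqref{special}. Your separate treatment of $\mu=0$ via the Segre--Hirzebruch criterion matches the paper's remark that the assertion is trivial in that case.
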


\noindent
{\it{Remark}} 5. We used Bese's paper \cite{Be} for the spannedness of $\mathcal L$ but we did not for the very ampleness. Actually,
according to \cite[Theorem 4.2]{Be}, $\mathcal L$ would be very ample provided that for any
curve $C \in |xs+yf|$ with $x$ and $y$
nonnegative integers no both zero, such that $x \leq 4$, $y \leq 3$,
\begin{equation} \label{Bese}
\text{\rm {at most}}\ \ \nu :=   x(x-y+3)+y(4-x)-3\ \ \text{\rm{of the}}\  p_i \text{\rm{'s belong to}}\  C.
\end{equation}
Let $\mu=7$. Note that $\nu > 7$ for all admissible pairs $(x,y)$ with $x \geq 3$, hence requirement
\eqref{Bese} is meaningful only for the restricted subset of admissible $(x,y)$ with $x \leq 2$. Moreover,
\eqref{Bese} is obvious when $x=0$ because the seven points lie on distinct fibers; similarly, \eqref{Bese}
is obvious also when $y=0$, since in this case $C=s$ or $2s$ according to whether $x=1$ or $2$, and we know that at most
one of the $p_i$'s can lie on $s$. This reduces the analysis to $11-5=6$ admissible pairs and a close check can be
done. In particular, recalling that $\mu=7$, in addition to the conditions in j) of Theorem
\ref{restrictions}, we see that \eqref{Bese} includes some further restrictions,  like that the seven point cannot lie
on a curve in $|s+3f|$ or in $|2s+3f|$.
Looking at the plane model and using the usual notation, we see that these curves are the proper transforms on $\mathbb F_1$ of a cubic passing through $q_0$ with multiplicity 2 or 1 respectively. However, the fact that these curves do not enter at
all in our conditions (see jjj) of Theorem \ref{restrictions}) is not a trouble once we consider that Bese's conditions are only sufficient conditions (but not necessary) for the very ampleness of $\mathcal L$.
For the remaining $(x,y)$, \eqref{Bese} is satisfied in view of the conditions in j) of Theorem \ref{restrictions}.
The key point, however, is that this happens also if the seven points  lie on an irreducible curve $C \in |2s+2f|$.
Thus $\mathcal L$ would be very ample even in this case according to Bese, but we stress that this is in contrast with
what we proved (this is exactly the exceptional configuration in \eqref{special}).

\section{Rational conic fibrations with $g=2$ containing lines}

Finally, in this Section we characterize the rational conic fibrations $(X,\mathcal L)$ of sectional genus
$2$, with $\mathcal L$ very ample, containing a line $C$ transverse to the fibers.

If $X=\mathbb F_e$ with $e\leq 2$, then $\mathcal L = [2s+(e+3)f]$ by Theorem \ref{structure}. Since $C$ is an irreducible
curve we have either $C=s$ or $C \sim as+bf$ for some integers $a$ and $b$ such that $b \geq ae$. Moreover, $a=1$ or $2$
because $C$ is transverse to the fibers and $C \cdot f \leq 2$ since $C$ is a line and $f$ is a conic. In the former case,
from $1 = C \cdot \mathcal L = s \cdot [2s + (e+3)f] = 3-e$ we get $e=2$. In the latter, we get
$1 = C \cdot \mathcal L = (as+bf) \cdot [2s + (e+3)f] = -2ae+2b+a(e+3) \geq a(e+3) \geq 3$, a contradiction.
Therefore
$e=2$ and $C = s$. On the other hand,
as $s \cdot (2s+5f)=1$ we have that this curve is in fact a line of $(X,\mathcal L)$. Next, suppose that $\mu \geq 1$
and use the description via the birational morphism $\eta$,
recalling that $\mu \leq 7$ since $\mathcal L$ is very ample.
By Proposition \ref{lines} we know that $C$ is a section of $X$ with $C^2=-2$. Then
$\gamma:=\eta(C)$ is a section of $\mathbb F_1$, hence $\gamma \in |s+\alpha f|$ for some integer $\alpha \geq 0$.
Moreover, $C = \eta^*\gamma - \sum_{i=1}^\mu \nu_ie_i$, where
$\nu_i = {\rm{mult}}_{p_i}(\gamma) = C \cdot e_i \leq 1$ for every $i=1, \dots ,\mu$, because both
$C$ and $e_i$ are lines in $X$ embedded in some $\mathbb P^N$ by $|\mathcal L|$.
Let $\varepsilon$ be the number of the $p_i$'s for which $\nu_i=1$. Then
$$-2 = C^2 = \gamma^2 -\varepsilon = 2\alpha -1-\varepsilon.$$
Thus $\alpha = \frac{1}{2}(\varepsilon - 1)$, so that $\varepsilon$ has to be odd. If $\varepsilon=1$
then $\alpha=0$, hence $\gamma=s$ and, up to renaming, $p_1 \in s$. If $\varepsilon > 1$,
then, $\varepsilon = 2\alpha +1 \geq 3$.
On the other hand,
$\varepsilon \leq \mu \leq 7$, hence $\alpha \leq 3$. Therefore
$\varepsilon = 3, 5$ or $7$ according to whether $\alpha = 1, 2$ or $3$, respectively.
Let $\alpha=1$; then $\gamma \in |s+f|$ and it contains three of the $p_i$'s
(clearly, no one of them can lie on $s$, by Remark 1).
Next let $\alpha=2$; then $\gamma \in |s+2f|$ and it contains five of the $p_i$'s. Clearly, here $\mu \geq 5$ and it can happen that $p_1 \in s$, because $s \cdot \gamma = s \cdot (s+2f)=1$.
Finally, if $\alpha=3$, then $\varepsilon = 7 = \mu$, i.e. $\gamma \in |s+3f|$ and it contains all seven points $p_i$'s.
The above discussion proves the following result.

\begin{thm}\label{gamma}
Let $(X,\mathcal L)$ be a rational conic fibration of sectional genus
$2$, with $\mathcal L$ very ample. Then $(X, \mathcal L)$ contains a line $C$ transverse to the fibers
if and only if either
$$(X,\mathcal L, C) = (\mathbb F_2, [2s+5f], s),$$
or $(X,\mathcal L)$ is as in point $(2)$ of Theorem $\ref{structure}$ with $\mu \leq 7$ and $C = \eta^{-1}(\gamma)$, where
$\gamma \subset \mathbb F_1$ is an irreducible section and one of the following conditions is satisfied:
\begin{enumerate}
\item[a)] $\gamma = s$ and it contains $p_1$;
\item[b)] $\gamma \in |s+f|$ and it contains three of the $p_i$'s;
\item[c)] $\gamma \in |s+2f|$ and it contains five of the $p_i$'s;
\item[d)] $\gamma \in |s+3f|$ and it contains $p_1, \dots , p_7$.
\end{enumerate}
\end{thm}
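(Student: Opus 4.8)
The plan is to separate the two structural possibilities for $(X,\mathcal{L})$ provided by Theorem \ref{structure}. In the minimal case $X = \mathbb{F}_e$ with $0 \leq e \leq 2$, one has $\mathcal{L} = [2s+(e+3)f]$, and I would parametrize any irreducible curve $C$ transverse to the fibers by writing either $C = s$ or $C \sim as+bf$. Transversality forces $C \cdot f = a \leq 2$, so $a \in \{1,2\}$, while ampleness of $\mathcal{L}_0$ gives $b \geq ae$. Imposing the line condition $C \cdot \mathcal{L} = 1$ and computing the intersection product in each subcase, I expect to rule out $C \sim as+bf$ entirely, since the product is bounded below by $a(e+3) \geq 3$, and to be left with $C = s$, which forces $e = 2$ via $1 = s \cdot [2s+(e+3)f] = 3-e$. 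A final check that $s \cdot (2s+5f) = 1$ confirms that $s$ is genuinely a line, yielding the first triple in the statement.

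For the non-minimal case $\mu \geq 1$, where very ampleness already imposes $\mu \leq 7$, I would invoke Proposition \ref{lines} to conclude that the transverse line $C$ is a $(-2)$-curve and a section of $X$. Its image $\gamma := \eta(C)$ is then an irreducible section of $\mathbb{F}_1$, hence $\gamma \in |s+\alpha f|$ for some $\alpha \geq 0$, and $C = \eta^*\gamma - \sum_{i=1}^{\mu} \nu_i e_i$ for suitable multiplicities $\nu_i$. The key observation is that $\nu_i = C \cdot e_i \leq 1$, since both $C$ and each $e_i$ are lines in the very ample embedding $X \subset \mathbb{P}^N$ and distinct lines there meet in at most one point. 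Letting $\varepsilon$ count the indices with $\nu_i = 1$, the self-intersection computation $-2 = C^2 = \gamma^2 - \varepsilon = 2\alpha - 1 - \varepsilon$ gives $\varepsilon = 2\alpha + 1$, so $\varepsilon$ must be odd.

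The enumeration then follows from the numerical bound $\varepsilon \leq \mu \leq 7$, which forces $\alpha \leq 3$; the four values $\alpha \in \{0,1,2,3\}$ produce exactly the cases $\varepsilon = 1,3,5,7$ listed as a)--d), with Remark 1 dictating whether $p_1$ may lie on $s$ in each configuration. For the converse, in each case I would take $\gamma$ to be an irreducible section of the prescribed class through the specified points and verify directly that its proper transform $C = \eta^{-1}(\gamma)$ satisfies $C^2 = -2$ and $C \cdot \mathcal{L} = 1$, so that $C$ is indeed a transverse line. I anticipate that the individual intersection calculations pose no serious difficulty; the one point requiring genuine justification is the bound $\nu_i \leq 1$, which rests on interpreting the exceptional curves and $C$ as lines of the embedded surface rather than on a purely formal divisor-class argument.
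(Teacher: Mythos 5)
Your proposal is correct and follows essentially the same route as the paper: the same case split via Theorem \ref{structure}, the same intersection computations ruling out all but $(\mathbb F_2,[2s+5f],s)$ in the minimal case, and the same use of Proposition \ref{lines} together with $\nu_i=C\cdot e_i\le 1$ (both being lines in the embedding) and $-2=C^2=2\alpha-1-\varepsilon$ to enumerate cases a)--d). The only cosmetic difference is that you justify $b\ge ae$ by ampleness of $\mathcal L_0$ rather than by irreducibility of $C$, which does not affect the argument.
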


At first glance the statement of Theorem \ref{gamma} does not seem to fit with
\cite[Proposition 8 (2)]{LM}.
But there is a good reason for that since the morphism $\eta:X \to \mathbb F_1$
in point (2) of Theorem \ref{structure} we refer to here is not the same as
the birational morphism $X \to \mathbb F_e$ constructed in \cite{LM}. In fact the above curve $\gamma$ has nothing to do with the curve
denoted by $\ell_0$ in \cite{LM}.

\smallskip
\noindent{\it{Remark}} 6.
i) Consider the four cases occurring in Theorem \ref{gamma} for $\mu \geq 1$. Clearly, in case a) we have $t=1$.
In case b), let $p_{i_1}, p_{i_2}, p_{i_3} \in \gamma$. Referring to the blow-up $\tau:\mathbb F_1 \to \mathbb P^2$
we have that $\gamma = \tau^{-1}(\ell)$, where $\ell \subset \mathbb P^2$ is a line containing three of the $q_i$'s. So three of the $q_i$'s are collinear.
Similarly, in case c), let $p_{i_1}, \dots , p_{i_5} \in \gamma$. Referring to $\tau$ again
we have that $\gamma = \tau^{-1}(G)$, where $G \in \big|\mathcal O_{\mathbb P^2}(2)-q_0 - \sum_{j=1}^{5} q_{i_j}\big|$.
So $q_0$ and five more points $q_i$'s (possibly one of them being infinitely near to $q_0$) are on a conic. As a consequence, in all these cases,
even if $t=0$, the above conditions prevent $(X,\mathcal L)$ from being a Castelnuovo surface, according to the definition in \cite[p.\ 2857]{LM}.
On the other hand, in case d), we have $\gamma = \tau^{-1}(\Gamma)$, where
$\Gamma \in \big|\mathcal O_{\mathbb P^2}(3)-2q_0 - \sum_{i=1}^{7} q_i \big|$.
So
the eight points $q_j$'s lie on a cubic having a double point at $q_0$. In this case
$(X,\mathcal L)$ is in fact a Castelnuovo surface according to \cite{LM}, provided that $t=0$.

ii) Looking over the proof of Theorem \ref{gamma}, suppose that $\gamma \not= s$ and let $\sigma$ be the proper transform of $s$; then $\sigma \not= C$.
Recall that, up to renaming, $p_1$ at most among the $p_i$'s can belong to $s$, by Proposition \ref{uno solo}.
Then $\sigma = \eta^*s - xe_1$, where $x=1$ or $0$ according to whether $p_1 \in s$ or not (i.e., $t=1$ or $0$).
We thus get
$\sigma \cdot \mathcal L = s \cdot (2s+4f) - x = 2-x$. Hence $\sigma$ is either a line or a conic, accordingly.
In particular, $\sigma$ is a conic if and only if $t=0$.
As a consequence, $\sigma \cdot C \leq 2$, since $C$ is a line. Moreover, equality means that the line $C$ lies
in the plane $\langle \sigma \rangle$ spanned by $\sigma$ in the $\mathbb P^N$ where $|\mathcal L|$ embeds $X$. An
easy computation shows that
\begin{equation} \notag
\sigma \cdot C = (\eta^*s - xe_1) \cdot \Big(\eta^*[s+\alpha f]- \sum_{j=1}^{\varepsilon}e_{i_j}\Big) =
\begin{cases}
\alpha -2 & \text{if $p_1 \in s \cap \gamma$},\\
\alpha -1 & \text{otherwise}.
\end{cases}
\end{equation}
This is $2$ if and only if $\alpha =3$, namely in case d) of Theorem \ref{gamma}. Moreover, since
in this case $2= \sigma \cdot C = s \cdot (s+3f) - x = 2-x$, implies $x=0$, we have $t=0$. Therefore
$C \subset \langle \sigma \rangle$ if and only if
$X$ embedded by $|\mathcal L|$ is a Castelnuovo surface of degree $5$ in $\mathbb P^4$.

\smallskip
Now we connect the previous discussions with the inflection points of
$X \subset \mathbb P^N$, linearly normally embedded by $|\mathcal L|$. Let us recall some general facts from
\cite[Section 2]{LM} in the specific case we are dealing with. Let $\mathcal P_X(\mathcal L)$ be the second principal part bundle
of $\mathcal L$ and let
$$j:H^0(X,\mathcal L)\otimes \mathcal O_X \to \mathcal P_X(\mathcal L)$$
be the sheaf homomorphism associating to every section $\sigma \in H^0(X,\mathcal L)$ its second jet $j_x(\sigma)$ evaluated at $x$,
for every $x \in X$.
Let $r:= \max_{x \in X}\{\text{rk}(j_x)\}$ be the maximum rank of $j_x$ on $X$. Clearly, $3 \leq r \leq \min\{N+1, 6\}$ since
$\mathcal P_X(\mathcal L)$ is a vector bundle of rank $6$, $X$ being a surface. A point $x \in X$ is said to an
{\it{inflection point}} if $\text{rk}(j_x) < r$.
This is in accordance with the fact that the {\it{osculating space}} to $X$ at $x$
is defined as $\text{Osc}_x(X):=\mathbb P({\rm{Im}}j_x)$. Then the {\it{inflectional locus}} $\Phi(X)$ of $X$ is defined as
the set of the inflection points.
In view of the definition of the osculating space, the osculating hyperplanes to $X$ at $x$ can be
regarded as the elements of the linear system $|\mathcal L- 3x|$ (hyperplane sections of $X$ having a singular
point of multiplicity $\geq 3$ at $x$). As a consequence,
$\dim(|\mathcal L - 3x|) + \dim \big(\text{Osc}_x(X)\big) = N-1$, or equivalently
$\text{rk}(j_x)= N - \dim(|\mathcal L - 3x|)$. In particular, we see that
\begin{equation}\label{why}
\text{\rm{if}}\ |\mathcal L - 3x| = \emptyset,\ {\rm{then}}\ x \not\in \Phi(X).
\end{equation}
Coming back to the situation in Theorem \ref{gamma}, note that
$$N = h^0(\mathcal L)-1 \geq h^0(2s+4f) -1 - \mu = 11-\mu$$
due to \eqref{h0}. Moreover, $N \geq 4$ since $\mathcal L$ is very ample.
If $N \geq 5$, then the line $C$
is always contained in $\Phi(X)$, according to \cite[Theorem 5 (2)]{LM}.
If $N=4$, which implies $\mu=7$, we have to analyze all possible cases a)--d) in Theorem \ref{gamma} to decide whether the
line $C$ is contained or not in $\Phi(X)$. We can argue as follows.
Fix $x \in C$ and let $F_x$ be the fiber of $X$ passing through $x$. Since $C$ and $F_x$ are a line and a conic
containing $x$, they are fixed components
of $|\mathcal L - 3x|$ \cite[Lemma 2(1)]{LM}. Thus,
$$\dim(|\mathcal L - 3x|) = \dim(|\mathcal L - F_x - C -x|) = \dim(|-K_X-C-x|),$$
recalling \eqref{fundamental}.
Keeping the notation used in the proof of Theorem
\ref{gamma}, we can write
$C \sim \eta^*(s+\alpha f) - \sum_{j=1}^{\varepsilon} e_{i_j}$,
where $\varepsilon = 2\alpha + 1$, with $\alpha = 0,1,2,3$ according to cases
a), b), c) and d) respectively in Theorem \ref{gamma}.
Letting $P:=\{p_1, \dots ,p_{\mu}\}$, we thus see that $P \setminus \gamma$ consists
of $\mu - \varepsilon = \mu-1 -2\alpha$ points.
Thus
$$\dim(|-K_X -C|) = \dim\Big(\big|\eta^*\big(s+(3-\alpha)f\big) - \sum_{k=1}^{\mu-\varepsilon} e_{i_k}\big|\Big)$$
$$\qquad \qquad \qquad \quad \geq  \dim(|s+(3-\alpha)f|) - (\mu-1 - 2\alpha),$$
equality holding if and only if
\begin{equation}\label{independent}
P \setminus \gamma\ \text{\rm{imposes linearly independent linear conditions on}}\ |s+ (3-\alpha)f|.
\end{equation}
Note that this certainly happens in case d) of Theorem \ref{gamma}, because $P \setminus \gamma$ is empty.
Now, a straightforward computation shows that $h^0\big(s+(3-\alpha)f\big) = 7-2\alpha$, hence
$\dim(|-K_X -C|)= 0$, provided that \eqref{independent} holds and $\mu=7$. This in turn implies that
\begin{equation}\label{empty}
|-K_X-C-x| = \emptyset
\end{equation}
unless $x \in D \cap C$, where $D$ is the only divisor in $|-K_X - C|$.
But $D$ is a section and $D \cdot C=2$, hence \eqref{empty} holds for the general point $x \in C$.
Then, as a consequence of \eqref{why}, $C$ is not contained in $\Phi(X)$. So we have the following result.
\begin{prp} \label{final}
Let $(X, \mathcal L)$ and $C$ be as in Theorem $\ref{gamma}$ and suppose that $N=4$ (hence $\mu=7$). If \eqref{independent} holds,
then the line $C$ is not contained in the inflectional locus of $X$. In particular this applies to case \rm{d)}.
\end{prp}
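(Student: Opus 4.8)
The plan is to show that under the stated hypothesis the osculating-space rank is maximal along the generic point of the line $C$, by exhibiting that $|\mathcal L - 3x|$ is empty for a general $x \in C$; then \eqref{why} immediately gives $x \notin \Phi(X)$, so $C \not\subset \Phi(X)$. The starting observation is the reduction already set up before the statement: since $C$ is a line and $F_x$ a conic both passing through $x$, they are fixed components of $|\mathcal L - 3x|$ by \cite[Lemma 2(1)]{LM}, and using \eqref{fundamental} this lets me rewrite
\begin{equation}\notag
\dim(|\mathcal L - 3x|) = \dim(|-K_X - C - x|).
\end{equation}
So everything reduces to understanding the linear system $|-K_X - C|$ and its behavior under imposing one further general base point $x$ on $C$.

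\textbf{First} I would compute $\dim(|-K_X - C|)$ directly on $\mathbb F_1$ via the birational morphism $\eta$. Writing $C \sim \eta^*(s+\alpha f) - \sum_{j=1}^{\varepsilon} e_{i_j}$ with $\varepsilon = 2\alpha+1$ (as recorded in the proof of Theorem \ref{gamma}) and recalling $-K_X = \mathcal L - F$, a direct subtraction gives
\begin{equation}\notag
-K_X - C \sim \eta^*\big(s+(3-\alpha)f\big) - \sum_{k=1}^{\mu-\varepsilon} e_{i_k},
\end{equation}
where the remaining blown-up points are exactly those of $P \setminus \gamma$, numbering $\mu - \varepsilon = \mu - 1 - 2\alpha$. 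A routine count (stated in the excerpt) gives $h^0(s+(3-\alpha)f) = 7 - 2\alpha$. The hypothesis \eqref{independent} says precisely that the $\mu - 1 - 2\alpha$ points of $P \setminus \gamma$ impose independent conditions on $|s+(3-\alpha)f|$, so that
\begin{equation}\notag
\dim(|-K_X - C|) = (7 - 2\alpha - 1) - (\mu - 1 - 2\alpha) = 7 - \mu.
\end{equation}
With $\mu = 7$ this yields $\dim(|-K_X - C|) = 0$: the system consists of a single effective divisor $D$.

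\textbf{The final step}, and the place where the geometry actually enters, is to show that imposing one more general point $x \in C$ drops this to the empty system. Since $\dim(|-K_X - C|) = 0$, the general point $x$ of $C$ will fail to lie on the unique divisor $D$ provided $C$ is not a component of $D$; the only way the passage $|-K_X-C| \to |-K_X-C-x|$ could fail to empty the system is if $C \subseteq D$ or $x \in D \cap C$. Here I would argue that $D$ is a section of the fibration and $D \cdot C = 2$, so $D \cap C$ is a finite set (at most two points) and in particular does not contain the general $x \in C$; hence $|-K_X - C - x| = \emptyset$ for general $x$. This gives \eqref{empty}, and via \eqref{why} completes the proof. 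I expect \textbf{the main obstacle} to be the bookkeeping showing $D$ is a section with $D \cdot C = 2$ rather than having $C$ itself as a component — i.e. ruling out the degenerate possibility that $C \subseteq D$ — but this is controlled by the intersection number $D \cdot C = (-K_X - C)\cdot C$, computed on $X$ from the classes above, which is positive and finite. For case d) the argument is automatic since $P \setminus \gamma = \emptyset$, so \eqref{independent} holds trivially and no independence hypothesis is needed.
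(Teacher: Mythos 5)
Your proposal follows the paper's own argument step for step: the same reduction of $|\mathcal L-3x|$ to $|-K_X-C-x|$ via the fixed components $C$ and $F_x$, the same computation of $-K_X-C$ on $\mathbb F_1$ giving $\dim(|-K_X-C|)=0$ under \eqref{independent} with $\mu=7$, and the same use of $D\cdot C=2$ with $D$ a section to conclude $|-K_X-C-x|=\emptyset$ for general $x\in C$, whence \eqref{why} applies. The proof is correct and takes essentially the same approach as the paper.
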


{\it{Remark}} 7. In fact, as to case d) we proved in \cite[Theorem 18]{LM} that when $X \subset \mathbb P^4$
is a Castelnuovo surface, then the inflectional locus is finite and consists of $75$ points, in general. We use this opportunity to point out that
unlike we said in \cite[Example b), p.\ 2870]{LM} Castelnuovo surfaces, as defined in \cite[p.\ 2857]{LM} do not exhaust the class of surfaces
of sectional genus $2$ in $\mathbb P^4$. Actually, there exist surfaces with $t=0$ that are not Castelnuovo surfaces as well as examples of surfaces with $t=1$.

\medskip

The following example offers an interesting situation in connection with Proposition \ref{lines}, Theorem \ref{gamma} and Proposition \ref{final}.

{\it{Example}}. Let $X$ be the surface obtained by blowing-up $\mathbb F_1$ at $7$ points $p_1, \dots , p_7$ lying
on an irreducible curve $\gamma \in |s+3f|$ and sufficiently general, with $p_1 \in s$, and let $\mathcal L = \eta^*[2s+4f]-\sum_{i=1}^7 e_i$, with obvious
meaning of the symbols. Then $\mathcal L$ is very ample, $|\mathcal L|$ embeds $X$ in $\mathbb P^4$ as a quintic surface
and $(X,\mathcal L)$ is a rational conic fibration with $g=2$ and $t=1$. In particular,
it is not a Castelnuovo surface in the sense of \cite[p.\ 2857]{LM}. Let $\widetilde{s}$ and $C$ be the proper transforms on $X$
of $s$ and of $\gamma$, respectively. Both curves are lines transverse to the fibers of $X$, and in fact they are $(-2)$-curves. They correspond
to case a) and to case d) respectively in Theorem \ref{gamma}.
Since $\eta^*s = \widetilde{s} + e_1$ and $\eta^* \gamma = C + \sum_{i=1}^7 e_i$, one immediately sees that $\widetilde{s}$ and $C$ intersect at one point, say $y_1$. Similarly,
the line $e_1$, which is a component of a singular fiber, intersects both $C$ and $\widetilde{s}$. Set $y_2 := e_1 \cap C$ and $y_3 := e_1 \cap \widetilde{s}$.
Following the argument used to prove Proposition \ref{final}, we see that $-K_X-C = \eta^*s = \widetilde{s}+e_1$. Similarly,
$-K_X-\widetilde{s} = \eta^*[s+3f]-\sum_{i=2}^7 e_i = C + e_1$. According to Proposition \ref{final} the conclusion is that no $x \in C \cup \widetilde{s}$ can be an inflection point
of $X$ except the three points $y_j$, $j=1,2,3$, which indeed are such.

\smallskip

{\bf Acknowledgements}: The authors are grateful to the referee for useful remarks. The first author is a member of G.N.S.A.G.A. of the Italian INdAM. He would like to thank the PRIN 2015 Geometry of Algebraic Varieties and the University of Milano for partial support.
The second author wants to thank the Spanish Ministry of Science, Innovation and Universities (Project MTM 2015-65968-P ``Geometr\'{i}a algebraica y anal\'{i}tica y aplicaciones'').

\smallskip


\begin{thebibliography}{LMP2}

\bibitem {Be} {E. Bese}, {\it On the spannedness and the very ampleness of certain line bundles on the blow-ups of $\mathbb P^2_{\mathbb C}$ and $\mathbb F_r$},
Math.\ Ann.\ {\bf 262} (1983), 225--238.

\bibitem {De} {M. Demazure}, {\it{Surfaces de del Pezzo, II--V}}, S\'eminaire sur les singularit\'es des surfaces,
Lect.\ Notes in Math., vol.\ 777, Springer-Verlag, Berlin, 1980.

\bibitem {F}  {T. Fujita}, {\it Classification Theories of Polarized Varieties}, London Math.\ Soc.\ Lectures
Notes Ser., vol. 155, Cambridge Univ.\ Press, 1990.

\bibitem {Ha} {R. Hartshorne}, {\it Algebraic Geometry},
Springer-Verlag, New York--Heidelberg--Berlin 1977.

\bibitem {Io} {P. Ionescu}, {\it  Embedded projective varieties
of small invariants}, in ``Proceedings of the 1982 Week of
Algebraic Geometry, Bucharest", Lecture Notes in Math. {\bf 1056},
Springer-Verlag  1984, pp.~ 142--187.


\bibitem {LM} {A. Lanteri and R. Mallavibarrena}, {\it Osculation
for conic fibrations},
J.\ Pure Appl.\ Algebra {\bf 220} (2016), 2852--2878.

\bibitem {LMP} {A. Lanteri, R. Mallavibarrena, and R. Piene},
{\it Inflectional loci of quadric fibrations}, J. Algebra
{\bf 441} (2015), 363--397; {\it Corrigendum}, ibidem, {\bf 508} (2018), 589--591.

\bibitem {LP} {A. Lanteri and M. Palleschi},
{\it About the adjunction process for polarized algebraic surfaces}, J. reine angew.\ Math.\
{\bf 352} (1984), 15--239.

\bibitem {N} {M. Nagata}, {\it On rational surfaces. I. Irreducible curves of arithmetic genus $0$ or $1$}, Mem.\ Coll.\ Sci.\ Univ.\
Kyoto Ser.\ A Math.\ {\bf 32} (1960), 351--370.

\bibitem {R} {I. Reider},
{\it Vector bundles of rank $2$ and linear systems on algebraic surfaces},
Ann.\ of Math.\ {\bf 127} (1988), 309--316.

\bibitem {Sh} {I. R. Shafarevich et al.}, {\it  Algebraic Surfaces}, Proc.\ Steklov Inst.\ Math.\
{\bf 75} (1965) (transl.\ Amer.\ Math.\ Soc.)

\bibitem {Y} {K. Yokoyama}, {\it On blowing-up of polarized surfaces}, J. Math.\ Soc.\ Japan {\bf 51} (1999), 523--533.



\end{thebibliography}
\end{document}